\newcommand{\mypar}[1]{\vspace{0.03in}\noindent{\bf #1.}}
\newcounter{brojac}
\newtheorem{assumption}[brojac]{Assumption}
\newtheorem{theorem}{Theorem}
\newtheorem{lemma}[theorem]{Lemma}
\newtheorem{definition}[theorem]{Definition}
\newtheorem{example}[theorem]{Example}
\newtheorem{corollary}[theorem]{Corollary}
\newtheorem{observation}[theorem]{Observation}
\title{Consensus and Products of Random Stochastic Matrices: Exact Rate for Convergence in Probability}
\author{Dragana Bajovi\'c,  Jo\~ao
Xavier, Jos\'e M.~F.~Moura and Bruno Sinopoli  
\thanks{Work of Dragana Bajovi\'c, Jo\~ao
Xavier and Bruno Sinopoli is partially supported by grants CMU-PT/SIA/0026/2009 and
SFRH/BD/33517/2008 (through the Carnegie Mellon/Portugal Program
managed by ICTI) from Funda\c{c}\~{a}o para a Ci\^encia e Tecnologia and
also by
ISR/IST plurianual funding (POSC program, FEDER). Work of Jos\'e~M.~F.~Moura is partially supported by NSF under grants CCF-1011903 and CCF-1018509, and by AFOSR grant
FA95501010291. Dragana Bajovi\'{c} holds fellowship
from the Carnegie Mellon/Portugal Program.}
\thanks{Dragana Bajovi\'{c} is with the
Institute for Systems and Robotics
(ISR), Instituto Superior T\'{e}cnico (IST), Lisbon, Portugal, and with
the Department of Electrical and Computer Engineering, Carnegie Mellon
University, Pittsburgh, PA, USA {\tt\small dragana@isr.ist.utl.pt, dbajovic@andrew.cmu.edu}}%
\thanks{Jo\~ao Xavier is with the Institute for Systems and Robotics (ISR),
Instituto Superior T\'{e}cnico (IST), Lisbon, Portugal {\tt\small
jxavier@isr.ist.utl.pt}}
\thanks{Bruno Sinopoli and Jos\'e M.~F.~Moura are with the Department of
Electrical and Computer
Engineering, Carnegie Mellon University, Pittsburgh, PA, USA {\tt\small
brunos@ece.cmu.edu, moura@ece.cmu.edu; ph: (412)268-6341; fax: (412)268-3890}}}%
\begin{document}
\maketitle
\vspace{4mm}
\begin{abstract}
Distributed consensus and other linear systems with system stochastic matrices $W_k$ emerge in various
settings, like opinion formation in social networks, rendezvous of
robots, and distributed inference in sensor networks. The matrices $W_k$ are often random, due to, e.g., random packet dropouts in wireless sensor networks. Key in analyzing
the performance of such systems is studying convergence of matrix products $W_kW_{k-1} \cdots W_1$.
 In this paper, we find the exact exponential rate $I$ for the convergence in probability
 of the product of such matrices when time $k$ grows large, under the assumption that the $W_k$'s are symmetric and independent identically distributed in time. Further, for commonly used random models like with gossip and
 link failure, we show that the rate $I$ is found by solving a min-cut problem and,
 hence, easily computable. Finally, we apply our results to optimally allocate the sensors' transmission power
 in consensus+innovations distributed detection.
\end{abstract}

\hspace{.43cm}\textbf{Keywords:} Consensus, consensus+innovations, performance analysis,
random network, convergence in probability, exponential rate.
\newpage

\section{Introduction}
\label{section-intro}
Linear systems with stochastic system matrices $W_k$ find applications in sensor~\cite{SoummyaConferenceConsensus},
multi-robot~\cite{Multi-Robot},
 and social networks~\cite{Golub-Jackson}. For example, in modeling opinion formation in social networks~\cite{Golub-Jackson},  individuals set their new opinion to the weighted average of their own opinion
 and the opinions of their neighbors.
 These systems appear both as autonomous, like consensus or gossip algorithms~\cite{BoydGossip}, and as input-driven algorithms, like consensus+innovations distributed inference~\cite{SoummyaEst}.
 Frequently, the system matrices $W_k$ are \emph{random}, like, for example, in consensus in wireless sensor networks,  due to either the use of a randomized
 protocol like gossip~\cite{BoydGossip}, or to link failures--random packet dropouts. In this paper, we determine the
      exact convergence rate of products of
      random, independent identically distributed (i.i.d.) general symmetric stochastic\footnote{By stochastic, we mean a nonnegative matrix whose rows sum to $1$. Doubly stochastic matrices besides row have also column sums equal to $1$.} matrices $W_k$, see Section~\ref{Sec-Main}. In particular, they apply to gossip and link failure.  For example, with gossip on a graph $G$, each realization of $W_k$ has the sparsity structure of the Laplacian matrix of a one link subgraph of $G$, with positive entries being arbitrary, but that we assume bounded \nolinebreak away \nolinebreak from \nolinebreak zero.

When studying the convergence of
products $W_k W_{k-1}...W_1$, it is well known that, when the modulus of the second largest eigenvalue of ${\mathbb E}\left[ W_k \right]$ is strictly less than
$1$, this product converges to $J:=\frac{1}{N}11^\top$ almost surely~\cite{JadbabaieErgodic} and, thus, in probability, i.e.,
for any $\epsilon>0$,
\begin{equation}
\label{eqn-prob-phi-j}
\mathbb P\left(\left\|W_k\cdots W_1-J\right\| \geq \epsilon \right) \rightarrow 0\,\,\mathrm{when\,\,}k\rightarrow \infty,
\end{equation}
where $\|\cdot\|$ denotes the spectral norm. This probability converges exponentially fast to zero with $k$~\cite{jadbabaie_on_consensus}, but, so far as we know, the exact convergence rate has not yet been computed. In this work, we compute the exact exponential rate of decay of the probability in~\eqref{eqn-prob-phi-j}.

\mypar{Contributions} Assuming that the non-zero entries of $W_k$ are bounded away from zero, we compute the exact exponential decay rate of the probability in~\eqref{eqn-prob-phi-j} by solving with equality (rather than lower and upper bounds) the corresponding large deviations limit, for every $\epsilon>0$:
\begin{equation}
\label{eq-consensus-rate}
\lim_{k \rightarrow \infty}\frac{1}{k}\log \mathbb P\left(\left\|W_k\cdots W_1-J\right\| \geq \epsilon \right)= - I,
\end{equation}
where the convergence rate~$I\geq 0$. Moreover, we characterize the rate $I$ and show that it does not depend on $\epsilon$. Our results reveal that the exact rate~$I$ is solely a function of the graphs induced by the matrices $W_k$ and the corresponding probabilities of occurrences of these graphs. In general, the computation of the rate $I$ is a combinatorial problem. However, for special important cases, we can get particularly simple expressions. For example, for a gossip on a connected tree, the rate is equal to $|\log(1-p_{ij})|$, where $p_{ij}$  is the probability of the link that is least likely to occur. Another example is with symmetric structures,
like uniform gossiping and link failures over a regular graph for which we show that the rate $I$ equals $|\log p_{\mathrm{isol}}|$, where $p_{\mathrm{isol}}$ is
 the probability that a node is isolated from the rest of the network.
For gossip with more general graph structures, we show that the rate $I=|\log(1-c)|$ where $c$ is the min-cut value
(or connectivity~\cite{compactingcuts}) of a graph whose links are weighted by the gossip link probabilities;
 the higher the connectivity $c$ is (the more costly or, equivalently, less likely it is to disconnect the graph) the larger
  the  rate $I$ and the faster the convergence are. Similarly, with link failures on general graphs, the rate is computed by solving a min-cut problem and is computable in polynomial time.

We now explain the intuition behind our result. To this end, consider the probability in~\eqref{eqn-prob-phi-j} when $\epsilon=1$\footnote{ It turns out, as we will show in Section~\ref{Sec-Main}, that the rate does not depend on $\epsilon$. Remark also that, because the matrices $W_k$ are stochastic, the spectral norm of $W_k\cdots W_1-J$ is less or equal to $1$ for all realizations of $W_1$,\ldots, $W_k$. Thus, the probability in~\ref{eqn-prob-phi-j} is equal to $0$ for $\epsilon>1$. }, i.e.,
when the norm of $\prod_{t=1}^k W_k-J$ stays equal to $1$. This happens only if the supergraph of all the graphs associated with the matrix realizations $W_1,\ldots,W_k$ is disconnected.
Motivated by this insight, we define the set of all possible graphs induced by the matrices $W_k$,
i.e., the set of realizable graphs,
and introduce the concept of disconnected collection of such graphs.
 For concreteness, we explain this here assuming gossip on a connected tree with $M$ links. For gossip on a connected tree, the set of realizable graphs consists of all one-edge subgraphs of the tree (and thus is of size $M$). If any fixed $j<M$ graphs were removed from this collection, the supergraph of the remaining graphs is disconnected; this collection of the remaining graphs is what we call a disconnected collection.
 Consider now the event that all the graph realizations (i.e., activated links) from time $t=1$ to time $t=k$ belong to a fixed disconnected collection, obtained, for example, by removal of one fixed one-edge graph.
 Because there were two isolated components in the network, the norm of $\prod_{t=1}^k W_k-J$ would under this event stay equal to $1$. The probability of this event is $M(1-p)^k$, where we assume that the links occur with the same probability $p=\frac{1}{M}$. Similarly, if all the graph realizations belong to a disconnected collection obtained by removal of $j$ one-edge graphs, for $1\leq j<M$, the norm remains at $1$, but now with probability ${M \choose j}(1-jp)^k$.
 For any event indexed by $j$ from this graph removal family of events, the norm stays at $1$ in the long run, but what will determine the rate is the most likely of all such events. In this case, the most likely event is that a single one-edge graph remains missing from time $1$ to time $k$, the probability of which is $M(1-p)^k$, yielding the value of the rate $I=|\log (1-p) |$.
This insight that the rate~$I$ is determined by the probability of the most likely disconnected collection of graphs  extends to the general matrix process.

\mypar{Review of the literature}
%
%
%
%
There has been a large amount
of work on linear systems driven by stochastic matrices. Early
work includes~\cite{tsitsiklisThesis84, DeGroot}, and the topic
received renewed interest in the past decade~\cite{jabaNeghbor,MurraySwitching}.
 Reference~\cite{MurraySwitching}
 analyzes convergence of the consensus algorithm
 under deterministic time-varying matrices $W_k$. Reference~\cite{BoydGossip}
  provides a detailed study of the standard gossip model,
  that has been further modified, e.g., in \cite{DimakisGossip,RabbatGossip};
   for a recent survey, see~\cite{dimakiskarmourarabbatscaglione-2010}. Reference
 \cite{JadbabaieErgodic} analyzes convergence under random matrices $W_k$,
 not necessarily symmetric, and ergodic -- hence not necessarily independent
 in time. Reference~\cite{Consensus-Delays} studies effects
 of delays, while reference~\cite{Consensus-Quantization} studies the impact of quantization. Reference~\cite{Yilin} considers random matrices $W_k$ and addresses the issue of
the communication complexity of consensus algorithms. The recent
 reference \cite{Consensus-Olshevsky-Tsitsiklis} surveys consensus and averaging algorithms and provides tight bounds on the worst case averaging times for deterministic
 time varying networks.
 In contrast with consensus (averaging) algorithms,
 \emph{consensus+innovations} algorithms include both a local averaging term (consensus) and an
 innovation term (measurement) in the state update process. These
 algorithms find applications in distributed
 inference in sensor networks,
 see, e.g.,~\cite{SoummyaEst,Sayed-LMS,Stankovic-Estimation} for distributed estimation,
 and, e.g.,~\cite{running-consensus-detection,GaussianDD,Non-Gaussian-DD}, for distributed detection.
  In this paper, we illustrate the usefulness of the rate of consensus $I$ in the context of
   a consensus+innovations algorithms by optimally allocating
  the transmission power of sensors for distributed detection.

Products of random matrices appear also in many other fields that use techniques drawn from Markov process theory. Examples include repeated interaction dynamics in quantum systems~\cite{BruneauJoyeMerkli}, inhomogeneous Markov chains with random transition matrices~\cite{TouriNedic,BruneauJoyeMerkli}, infinite horizon control strategies for Markov chains and non-autonomous linear differential equations~\cite{Leizarowitz}, or discrete linear inclusions~\cite{TouriNedic-backward-products}. These papers are usually concerned with deriving convergence results on these products and determining the limiting matrix. Reference~\cite{BruneauJoyeMerkli} studies the product of matrices belonging to a class of complex contraction matrices and characterizes the limiting matrix by expressing the product as a sum of a decaying process, which exponentially converges to zero, and a fluctuating process. Reference~\cite{Leizarowitz} establishes conditions for strong and weak ergodicity for both forward and backward products of stochastic matrices, in terms of the limiting points of the matrix sequence. Using the concept of infinite flow graph, which the authors introduced in previous work, reference~\cite{TouriNedic} characterizes the limiting matrix for the product of stochastic matrices in terms of the topology of the infinite flow graph.
For more structured matrices,~\cite{DiakonisWood} studies products of nonnegative matrices. For nonnegative matrices, a comprehensive study of the asymptotic behavior of the products can be found in~\cite{SenetaBook}. A different line of research, closer to our work, is concerned with the limiting distributions of the products (in the sense of the central limit theorem and large deviations). The classes of matrices studied are: invertible matrices~\cite{Tutubalin,Gauivarc'hRaugi} and its subclass of matrices of determinant equal to~$1$~\cite{LePage} and, also, positive matrices~\cite{Hennion}. None of these apply to our case, as the matrices that we consider might not be invertible ($W_k-J$ has a zero eigenvalue, for every realization of $W_k$) and, also, we allow the entries of $W_k$ to be zero, and therefore the entries of $W_k-J$ might be negative with positive probability.
Furthermore, as pointed out in~\cite{Kargin}, the results obtained in~\cite{Tutubalin,Gauivarc'hRaugi,LePage}  do not provide ways to effectively compute the rates of convergence. Reference~\cite{Kargin} improves on the existing literature in that sense by deriving more explicit \textit{bounds} on the convergence rates, while showing that, under certain assumptions on the matrices, the convergence rates do not depend on the size of the matrices; the result is relevant from the perspective of large scale dynamical systems, as it shows that, in some sense, more complex systems are not slower than systems of smaller scale, but again it does not apply to our study.

To our best knowledge, the \textit{exact} large deviations rate $I$ in~\eqref{eq-consensus-rate} has
not been computed for i.i.d. averaging matrices $W_k$, nor for the commonly used
sub-classes of gossip and link failure models. Results in the existing
literature provide upper and lower bounds on the rate $I$, but not the
exact rate $I$. These bounds are based on the second
largest eigenvalue of $\mathbb{E}[W_k]$ or $\mathbb{E}[W^2_k]$, e.g.,~\cite{BoydGossip,weight-opt,JadbabaieErgodic}.
 Our result~\eqref{eq-consensus-rate} refines these existing bounds, and sheds
 more light on the asymptotic convergence of the probabilities in~\eqref{eqn-prob-phi-j}.
 For example, for the case when each realization of $W_k$ has a connected underlying support graph (the case studied in~\cite{MurraySwitching}), we calculate the rate $I$ to
  be equal $+\infty$ (see Section~\ref{Sec-Main}), i.e., the convergence of the probabilities in~\eqref{eqn-prob-phi-j}
  is faster than exponential. On the other hand, the ``rate'' that would
  result from the bound based on $\lambda_2(\mathbb{E}[W^2_k])$ is finite unless
  $W_k \equiv J$. This is particularly relevant
  with consensus+innovations algorithms, where, e.g.,
  the consensus+innovations distributed detector is asymptotically
  optimal if $I=\infty$,~\cite{allerton}; this fact cannot be seen from the bounds
  based on $\lambda_2(\mathbb{E}[W^2_k])$.

The rate~$I$ is a valuable metric for the design of algorithms (or linear systems) driven by system
 matrices $W_k$, as it determines the algorithm's asymptotic performance and is easily computable
 for commonly used models. We demonstrate the usefulness
 of~$I$ by optimizing the allocation of the sensors' transmission power in a sensor
 network with fading (failing) links, for the purpose of
 distributed detection with the consensus+innovations algorithm~\cite{GaussianDD,Non-Gaussian-DD}.

\mypar{Paper organization} Section \ref{section-setup}
introduces the model for random matrices $W_k$ and
defines relevant quantities needed in the sequel.
Section~\ref{Sec-Main} proves the result
on the exact exponential rate~$I$ of consensus.
Section \ref{sec-Examples} shows
 how to compute the rate $I$ for gossip and link
 failure models via a min-cut problem.
  Section \ref{section-application}
  addresses  optimal power allocation
  for distributed detection by maximizing the rate $I$.
   Finally, section \ref{section-conclusion} concludes the paper.

\section{Problem setup}
\label{section-setup}

\mypar{Model for the random matrices $W_k$}
Let $\left\{W_k:\,k=1,2,...\right\}$ be a discrete time (random) process where $W_k$, for all $k$,
 takes values in the set of doubly stochastic, symmetric, $N \times N$ matrices.
 \begin{assumption}
 \label{assumption}
 We assume the following.
 \begin{enumerate}
 \item  The random matrices $W_k$ are independent identically distributed (i.i.d.).
 \item The entries of any realization $W$ of $W_k$ are bounded away from $0$ whenever positive. That is,
there exists a scalar $\delta$, such that, for any realization $W$, if $W_{ij}>0$, then $W_{ij}\geq \delta$. An entry of $W_k$ with positive value, will be called an active entry.
\item For any realization $W$, for all $i$, $W_{ii}\geq \delta$.
\end{enumerate}
\end{assumption}
Also, let $\mathcal W$ denote the set of all possible realizations of $W_k$.

\mypar{Graph process}
For a doubly stochastic symmetric matrix $W$, let $G(W)$ denote its induced undirected graph, i.e., $G(W)=\left(V,E(W)\right)$, where $V=\{1,2,\ldots,N\}$ is the set of all nodes and \[E(W)=\left\{ \{i,j\}\in {V\choose 2}: W_{ij}>0 \right\} .\] We define the random graph process $\left\{G_t: t=1,2,\ldots\right\}$
through the random matrix process $\{W_k:\,k=1,2,...\}$ by: $G_t=G(W_t)$, for $t=1,2,\ldots$. As the matrix process is i.i.d., the graph process is i.i.d. as well. We collect the underlying graphs of all possible matrix realizations $W$ (in $\mathcal W$) in the set $\mathcal G$:
\begin{equation}
\mathcal G:=\left\{ G(W): W \in \mathcal W\right\}.
\end{equation}
Thus, the random graphs $G_t$ take their realizations from $\mathcal G$. Similarly, as with the matrix entries, if $\{i,j\}\in E(G_t)$, we call $\{i,j\}$ an active link.

We remark that the conditions on the random matrix process from Assumption~\ref{assumption} are satisfied automatically for any i.i.d. model with finite space of matrices $\mathcal W$ ($\delta$ could be taken to be the minimum over all positive entries over all matrices from $\mathcal W$). We illustrate with three instances of the random matrix model the case when the (positive) entries of matrix realizations can continuously vary in certain intervals, namely, gossip, $d$-adjacent edges at a time, and link failures.

\begin{example}[Gossip model] Let $G=(V,E)$ be an arbitrary connected graph on $N$ vertices. With the gossip algorithm on the graph $G$, every realization of $W_k$ has exactly two off diagonal entries that are active: $[W_k]_{ij}=[W_k]_{ji}>0$, for some $\{i,j\}\in G$, where the entries are equal due to the symmetry of $W_k$. Because $W_k$ is stochastic, we have that $[W_k]_{ii}=[W_k]_{jj}=1-[W_k]_{ij}$, which, together with Assumption~\ref{assumption}, implies that $[W_k]_{ij}$ must be bounded (almost surely) by $\delta \leq [W_k]_{ij}\leq 1-\delta$. Therefore, the set of matrix realizations in the gossip model is:
\begin{align*}
\mathcal W^{\mbox{ {\scriptsize Gossip}}}=\bigcup_{\{i,j\} \in E}
& \left\{ A\in \mathbb R^{N\times N}: A_{ij}=A_{ji}=\alpha ,\: A_{ii}=A_{jj}=1-\alpha,\: \alpha \in [\delta, 1-\delta],\right. \\[-7pt]
& \left. \phantom{A\in\mathbb R^{N\times N}:}\;\;\; A_{ll}=1, \: \mathrm{for\,}l\neq i,j,\: A_{ml}=0, \mathrm{for\,} l\neq m \,\mathrm{and\,} l,m\neq i,j\right\}.
\end{align*}
\end{example}

\begin{example}[Averaging model with d-adjacent edges at a time] Let $G_d=(V,E)$ be a $d$-regular connected graph on $N$ vertices, $d\leq N-1$.
Consider the following averaging scheme where exactly $2d$ off-diagonal entries of $W_k$ are active at a time: $[W_k]_{ij}=[W_k]_{ji}>0$, for some fixed $i\in V$ and all $j\in V$ such that $\{i,j\}\in E$. In other words, at each time in this scheme, the set of active edges is the set of edges adjacent to some node $i\in V$. Taking into account Assumption~\ref{assumption} on $W_k$, the set of matrix realizations for this averaging model is:
\begin{align*}
\mathcal W^{\mathrm{ {\scriptsize d-adjacent}}}=& \bigcup_{i \in V}
 \left\{A\in \mathbb R^{N\times N}: A=A^\top, A_i=v, v\in \mathbb R^N,  v_j=0, \mathrm{if\,} \{i,j\}\notin E, 1^\top v=1,
 v\geq \delta, \right.\\[-6pt]
 & \left.  \phantom{ 1^\top v=1}\quad \quad\quad\;\;   A_{jj}=1-A_{ij}, \mathrm{for\,} \{i,j\}\in E, A_{ll}=1\, \mathrm{and\,} A_{il}=0,\, \mathrm{for\,} \{i,l\}\notin E\right\},
\end{align*}
where $A_i$ denotes the $i$th column of matrix $A$.

\end{example}

\begin{example}[Link failure (Bernoulli) model] Let $G=(V,E)$ be an arbitrary connected graph on $N$ vertices. With link failures, occurrence of each edge in $E$ is a Bernoulli random variable and occurrences of edges are independent. Due to independence, each subgraph $H=(V,F)$ of $G$, $F\subseteq E$, is a realizable graph in this model. Also, for any given subgraph $H$ of $G$, any matrix $W$ with the sparsity pattern of the Laplacian matrix of $H$ and satisfying Assumption~\ref{assumption} is a realizable matrix. Therefore, the set of all realizable matrices in the link failure model is
\begin{align*}
\mathcal W^{\mbox{ {\scriptsize Link\,fail.}}}=\bigcup_{F \subseteq E}
 \left\{ A\in \mathbb R^{N\times N}: A=A^\top,  A_{ij}\geq \delta, \mathrm{if\,} \{i,j\}\in F,\: A_{ij}=0, \mathrm{if\,} \{i,j\}\notin F, \:A 1=1\right\}.
\end{align*}
\end{example}

\mypar{Supergraph of a collection of graphs and supergraph disconnected collections}
For a collection of graphs $\mathcal H$ on the same set of vertices $V$, let $\Gamma(\mathcal H)$
denote the graph that contains all edges from all graphs in $\mathcal H$. That is, $\Gamma(\mathcal H)$ is the minimal graph (i.e., the graph with the minimal number of edges) that is a supergraph of every graph in $\mathcal H$:
\begin{equation}
\Gamma(\mathcal H):=  (V, \bigcup_{G\in \mathcal H}E(G)),
\end{equation}
where $E(G)$ denotes the set of edges of graph $G$.

Specifically, we denote by $\Gamma(s,t)$\footnote{Graph $\Gamma(s,t)$ is associated with the matrix product $W_{s}\cdots W_{t+1}$ going from time $t+1$ until time $s>t$. The notation $\Gamma(s,t)$ indicates that the product is backwards; see also the definition of the product matrix $\Phi(s,t)$ in Section~\ref{Sec-Main}.}
 the random graph that collects the edges from
all the graphs $G_r$ that appeared from time $r=t+1$ to $r=s$, $s>t$, i.e.,
\[\Gamma(s,t):=\Gamma(\left\{{G_{s},G_{s-1},\ldots, G_{t+1}}\right\}).\]
Also, for a collection $\mathcal H \subseteq \mathcal G$ we use $p_{\mathcal H}$ to denote the probability that a graph realization $G_t$ belongs to \nolinebreak $\mathcal H$:
\begin{equation}
\label{eq-def-pH}
p_{\mathcal H}=\sum_{H\in \mathcal {H}} \mathbb P(G_t=H).
\end{equation}

We next define collections of realizable graphs of certain types that will be important in computing the rate in~\eqref{eq-consensus-rate}.

\begin{definition} The collection $\mathcal H \subset \mathcal G$ is a disconnected collection
of $\mathcal G$ if its supergraph $\Gamma(\mathcal H)$ is disconnected.
\end{definition}
Thus, a disconnected collection is any collection of realizable graphs such that the union of all of its graphs yields a disconnected graph. We also define the set of all possible disconnected collections of $\mathcal G$:
\begin{equation}
\label{eq-def-all-maximal-collections}
\Pi(\mathcal G)=\left\{\mathcal H\subseteq \mathcal G: \mathcal H \mathrm{\;is\; a\; disconnected\; collection\; on\;\mathcal G} \right\}.
\end{equation}
We further refine this set to find the largest possible disconnected collections on $\mathcal G$.

\begin{definition} We say that a collection $\mathcal H \subset \mathcal G$ is a maximal disconnected collection
of $\mathcal G$ (or, shortly, maximal) if:
\renewcommand{\labelenumi}{\roman{enumi}}
\begin{enumerate}[i)]
\item $\mathcal H \in \Pi(\mathcal G)$, i.e., $\mathcal H$ is a disconnected collection on $\mathcal G$; and
\item for every $G\in {\mathcal G}\setminus {\mathcal H}$,  $\Gamma(\mathcal H \cup G)$ is connected.
\end{enumerate}
\end{definition}
In words, $\mathcal H$ is maximal if the graph $\Gamma(\mathcal H)$ that collects all
edges of all graphs in $\mathcal H$ is disconnected, but,
adding all the edges of any of the remaining graphs (that are not in $\mathcal H$) yields a connected graph.
We also define the set of all possible maximal collections of $\mathcal G$:
\begin{equation}
\label{eq-def-all-maximal-collections}
\Pi^\star(\mathcal G)=\left\{\mathcal H\subseteq \mathcal G: \mathcal H \mathrm{\;is\; a\; maximal\; collection\; on\;\mathcal G} \right\}.
\end{equation}
We remark that $\Pi^\star(\mathcal G)\subseteq \Pi(\mathcal G)$.
We now illustrate the set of all possible graph realizations $\mathcal G$, and
its maximal collections $\mathcal H$ with two examples.

\begin{example}[Gossip model] If the random matrix process is defined by the gossip algorithm on the full graph on $N$ vertices, then $\mathcal G=\left\{(V, \{i,j\}): \{i,j\} \in {V \choose 2} \right\}$; in words, $\mathcal G$ is the set of all possible one-link graphs on $N$ vertices. An example of a maximal collection of $\mathcal G$ is \[\mathcal G\setminus \left\{(V, \{i,j\}): j=1,\ldots N, j\neq i \right\},\] where $i$ is a fixed vertex, or, in words, the collection of all one-link graphs except of those whose link is adjacent to $i$. Another example is \[\mathcal G\setminus \left(\left\{(V, \{i,k\}): k=1,\ldots N, k\neq i, k\neq j \right\}\cup \left\{(V, \{j,l\}): l=1,\ldots N, l\neq i, l\neq j \right\}\right).\]
\end{example}
\begin{example}[Toy example] Consider a network of five nodes with the set of realizable graphs $\mathcal G=\{G_1,G_2,G_3\}$, where the graphs $G_i$, $i=1,2,3$ are given in Figure~\ref{Fig-Example-Toy}. In this model, each realizable graph is a two-link graph, and the supergraph of all the realizable graphs $\Gamma(\{G_1,G_2,G_3\})$ is connected.
\begin{figure}[thpb]
  \centering
  \includegraphics[scale=0.3, trim =  0 0 0 110mm ]{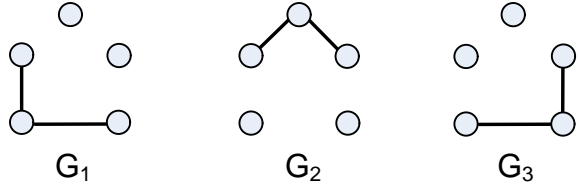}
  \caption{Example of a five node network with three possible graph realizations, each being a two-link graph}
\label{Fig-Example-Toy}
\end{figure}
\vspace{-1mm}
If we scan over the supergraphs $\Gamma(\mathcal H)$ of all subsets $\mathcal H$ of $\mathcal G$, we see that $\Gamma(\{G_1,G_2\})$, $\Gamma(\{G_2,G_3\})$ and $\Gamma(\{G_1,G_2,G_3\})$ are connected, whereas the $\Gamma(\{G_1,G_3\})$, and $\Gamma(G_i)=G_i$, $i=1,2,3$ are disconnected. Therefore, $\Pi(\mathcal G)=\{\{G_1\}, \{G_2\}, \{G_3\}, \{G_1,G_3\} \}$ and $\Pi^\star(\mathcal G)=\{\{G_2\}, \{G_1,G_3\} \}$.

\end{example}

We now observe that, if the graph $\Gamma(s,t)$ that collects all
the edges that appeared from time $t+1$ to time $s$ is disconnected, then
all the graphs $G_r$ that appeared from $r=t+1$ through $r=s$ belong to some
maximal collection $\mathcal H$.
\begin{observation}
\label{obs-there-exists-maximal-for-any-disc} If for some $s$ and $t$, $s>t$, $\Gamma(s,t)$ is disconnected, then there exists
a maximal collection $\mathcal H \in \Pi^\star(\mathcal G)$, such that $G_r \in \mathcal H$, for every $r$, $t<r\leq s$.
\end{observation}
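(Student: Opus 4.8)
The plan is to start from the collection of graphs that actually occurred between times $t+1$ and $s$ and enlarge it greedily to a maximal disconnected collection, using the fact that the set $\mathcal G$ of realizable graphs is finite.

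First I would set $\mathcal H_0 := \{ G_r : t < r \le s \}$. Each $G_r$ is a realization of the graph process, so $\mathcal H_0 \subseteq \mathcal G$, and by definition $\Gamma(\mathcal H_0) = \Gamma(s,t)$, which is disconnected by hypothesis; hence $\mathcal H_0 \in \Pi(\mathcal G)$ is a disconnected collection. I would also record the monotonicity fact that $\mathcal A \subseteq \mathcal B$ implies $E(\Gamma(\mathcal A)) \subseteq E(\Gamma(\mathcal B))$, so any subcollection of a disconnected collection is itself disconnected; equivalently, a disconnected collection fails to be maximal precisely when some single graph of $\mathcal G$ can still be adjoined to it without connecting the supergraph.

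Next I would build a strictly increasing chain $\mathcal H_0 \subsetneq \mathcal H_1 \subsetneq \cdots$ in $\Pi(\mathcal G)$: given $\mathcal H_j$, if it already satisfies condition (ii) of the definition of a maximal collection, stop and set $\mathcal H := \mathcal H_j$; otherwise pick $G \in \mathcal G \setminus \mathcal H_j$ with $\Gamma(\mathcal H_j \cup \{G\})$ disconnected and set $\mathcal H_{j+1} := \mathcal H_j \cup \{G\}$, which is again a disconnected collection. Since $\mathcal G$ is finite --- it is contained in the finite set of all undirected graphs on the $N$ vertices of $V$ --- this chain must terminate, yielding an $\mathcal H$ that is a disconnected collection satisfying (ii). It remains to argue $\mathcal H \neq \mathcal G$: if $\mathcal H = \mathcal G$ then $\Gamma(\mathcal G) = \Gamma(\mathcal H)$ would be disconnected, contradicting the standing assumption that the supergraph $\Gamma(\mathcal G)$ of all realizable graphs is connected (equivalently $|\lambda_2(\E[W_k])| < 1$), which is precisely what guarantees the convergence in~\eqref{eqn-prob-phi-j}. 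Hence $\mathcal H \in \Pi^\star(\mathcal G)$, and since $G_r \in \mathcal H_0 \subseteq \mathcal H$ for every $r$ with $t < r \le s$, the claim follows. The whole argument is just a maximal-element construction over a finite collection, so I do not expect a real obstacle; the only delicate point is making sure the greedy enlargement cannot run all the way up to $\mathcal G$, which is exactly where connectivity of $\Gamma(\mathcal G)$ is needed.
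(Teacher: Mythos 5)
Your argument is correct. The paper actually states this Observation without any proof at all, so there is nothing to compare against line by line; your greedy enlargement --- start from $\mathcal H_0=\{G_r: t<r\le s\}$, which is a disconnected collection because $\Gamma(\mathcal H_0)=\Gamma(s,t)$, and adjoin graphs one at a time while the supergraph stays disconnected, terminating because $\mathcal G$ is finite (being a set of graphs on $N$ vertices, even when $\mathcal W$ is infinite) --- is exactly the standard way to fill this gap and is what the authors evidently regarded as self-evident. The one genuinely delicate point, which you correctly isolate, is ruling out that the chain runs all the way up to $\mathcal G$: this does require the (implicit, never formally stated in Assumption~\ref{assumption}) hypothesis that $\Gamma(\mathcal G)$ is connected, equivalently that the consensus products converge at all; if $\Gamma(\mathcal G)$ were disconnected, $\Pi^\star(\mathcal G)$ would be empty under the proper-subset reading of the definition while $\Pi(\mathcal G)$ would not be, and both the Observation and Theorem~\ref{theorem-main} would degenerate. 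Flagging that as the precise place where connectivity of $\Gamma(\mathcal G)$ enters is a useful clarification rather than a flaw.
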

\section{Exponential rate for consensus}
\label{Sec-Main}
Denote $\Phi(s,t) := W_s W_{s-1}\cdots W_{t+1}$, and $\widetilde \Phi(s,t) := \Phi(s,t) -J$, for $s>t\geq 0$.
The following Theorem gives the exponential decay rate of the probability
$\mathbb P\left( \left\|\widetilde \Phi(k,0)\right\| \geq \epsilon \right)$.
\begin{theorem}
\label{theorem-main}
Consider the random process $\left\{ W_k:\,k=1,2,\ldots \right\}$ under Assumption~\ref{assumption}. Then:
\[\lim_{k \rightarrow \infty} \frac{1}{k} \log \mathbb P\left( \left\|\widetilde \Phi(k,0)\right\| \geq \epsilon \right)=-I,
\:\: \forall \epsilon \in (0,1]\]
where
\vspace{-1mm}
\begin{equation*}
I=\left\{  \begin{array}{ll}   +\infty & \mathrm{if}\; \Pi^\star(\mathcal G)=\emptyset\\   |\log p_{\max}| & \mathrm{otherwise} \\  \end{array}\right.,
\end{equation*}
and
\[p_{\max}=\max_{\mathcal H \in \Pi^\star(\mathcal G)} p_{\mathcal H}\]
is the probability of the most likely maximal disconnected collection.
\end{theorem}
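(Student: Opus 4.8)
The plan is to establish the equality in the large deviations limit by proving matching upper and lower bounds on $\frac{1}{k}\log\mathbb P(\|\widetilde\Phi(k,0)\|\geq\epsilon)$, and to handle the two cases of the theorem statement separately but with a common mechanism. The key structural fact I would lean on is that the spectral norm $\|\widetilde\Phi(k,0)\|$ is controlled, up to fixed constants, by whether the accumulation graph $\Gamma(k,0)$ is connected: if $\Gamma(k,0)$ is disconnected then $\|\widetilde\Phi(k,0)\|=1$ exactly (the product restricted to a disconnected component cannot average across components, so $1$ stays an eigenvalue of $\Phi(k,0)-J$); conversely, if over a window of bounded length $L$ the accumulated graph is connected and all entries are bounded below by $\delta$, then the norm contracts by a fixed factor $c=c(\delta,N,L)<1$. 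This second fact is the standard ergodicity/contraction estimate for products of doubly stochastic matrices with positive diagonal and connected union graph; I would cite or reprove it via a scrambling/coefficient-of-ergodicity argument.

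For the \textbf{lower bound} (i.e., $\limsup \frac1k\log\mathbb P \geq -I$, showing the probability is at least $e^{-(I+o(1))k}$): fix a maximal disconnected collection $\mathcal H^\star$ attaining $p_{\max}$. Consider the event $A_k=\{G_r\in\mathcal H^\star \text{ for all } 1\leq r\leq k\}$. By i.i.d.\ this has probability exactly $p_{\max}^k$, and on $A_k$ the graph $\Gamma(k,0)$ is a subgraph of $\Gamma(\mathcal H^\star)$, which is disconnected, so $\|\widetilde\Phi(k,0)\|=1\geq\epsilon$. Hence $\mathbb P(\|\widetilde\Phi(k,0)\|\geq\epsilon)\geq p_{\max}^k$, giving the bound $-|\log p_{\max}|$. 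When $\Pi^\star(\mathcal G)=\emptyset$ this construction is vacuous and there is nothing to prove for the lower bound since $I=+\infty$.

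For the \textbf{upper bound} (i.e., the probability is at most $e^{-(I-o(1))k}$): partition time into $\lfloor k/L\rfloor$ consecutive blocks of length $L$. Within each block, either the block's accumulated graph $\Gamma(\,\cdot\,)$ is connected — a ``good'' block, contributing a contraction factor $c$ to the norm — or it is disconnected, a ``bad'' block, in which case by Observation~\ref{obs-there-exists-maximal-for-any-disc} the graphs in that block all lie in some maximal collection, so the block-event has probability at most $\max_{\mathcal H\in\Pi^\star(\mathcal G)}p_{\mathcal H}^{L} \le p_{\max}^{L}$ when $\Pi^\star(\mathcal G)\neq\emptyset$ (and probability $0$ when $\Pi^\star(\mathcal G)=\emptyset$, since then no disconnected block can occur, forcing every block to be good and the norm to decay geometrically with certainty, whence $I=+\infty$). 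Since $\|\widetilde\Phi(k,0)\|\le c^{(\#\text{good blocks})}$ and $\|\widetilde\Phi\|\le1$ always, the event $\{\|\widetilde\Phi(k,0)\|\ge\epsilon\}$ forces the number of good blocks to be at most $m_0:=\lceil \log\epsilon/\log c\rceil$, a constant independent of $k$. Thus at least $\lfloor k/L\rfloor - m_0$ blocks are bad; by the block independence (each block uses a disjoint set of the i.i.d.\ matrices) and a union bound over which blocks are bad, $\mathbb P(\|\widetilde\Phi(k,0)\|\ge\epsilon)\le \binom{\lfloor k/L\rfloor}{m_0}\, p_{\max}^{L(\lfloor k/L\rfloor - m_0)}$. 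Taking $\frac1k\log$ and letting $k\to\infty$, the binomial coefficient is subexponential and the exponent tends to $\frac{1}{L}\cdot L\log p_{\max}=\log p_{\max}$, yielding $\le -|\log p_{\max}|$; and the $\epsilon$-dependence disappears in the limit, proving the rate is independent of $\epsilon$.

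The main obstacle I expect is the \emph{quantitative contraction estimate for good blocks} and, relatedly, choosing $L$ correctly: one must show there is a finite $L$ such that any length-$L$ window whose accumulated graph is connected produces a strict, uniform norm contraction $c<1$ depending only on $N$ and $\delta$. The subtlety is that a window's union graph being connected does not immediately give a scrambling product, because connectivity may be realized by edges appearing at different times; the clean way around this is to take $L$ large enough (e.g.\ $L=N$ suffices, since a connected graph on $N$ vertices has a spanning tree with $N-1$ edges, and one can propagate information along tree edges in the order they appear) and to track the contraction via Dobrushin's coefficient of ergodicity or, equivalently, via a Lyapunov argument on $\|\Phi(s,t)x\|$ for $x\perp\mathbf 1$. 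A secondary technical point is justifying block independence cleanly so the union bound in the upper bound is legitimate; this follows because disjoint blocks depend on disjoint collections of the i.i.d.\ $W_k$'s, but it should be stated carefully when combining it with the deterministic norm-decay bound. Everything else is bookkeeping with $\frac1k\log$ of the resulting expressions.
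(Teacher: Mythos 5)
Your lower bound is the same as the paper's (restrict all $k$ realizations to a $p_{\max}$-achieving maximal collection; disconnected union graph forces $\|\widetilde\Phi(k,0)\|=1$), but your upper bound takes a genuinely different route. The paper introduces random stopping times $T_i$ marking each instant the accumulated graph becomes connected, counts the number $M_k$ of such ``improvements'', and must then solve a small optimization (its Lemma~\ref{lemma-Mk=m-bound}) showing that the worst placement of improvements is equidistant, before splitting on $M_k\gtrless\alpha k$ and sending $\alpha\to0$. You instead use deterministic blocks of fixed length $L$: good blocks contract by a uniform factor, so $\{\|\widetilde\Phi(k,0)\|\geq\epsilon\}$ forces all but a constant number $m_0$ of blocks to be bad, and block independence plus a union bound over which blocks are good finishes it. Your version avoids the stopping-time bookkeeping and the worst-case-placement lemma entirely, at the price of a slack term you must remove at the end: the probability that a single block is bad is not $p_{\max}^L$ but at most $|\Pi^\star(\mathcal G)|\,p_{\max}^L$ (the graphs in a bad block lie in \emph{some} maximal collection, so you need a union bound over $\Pi^\star(\mathcal G)$), which leaves a residual $\frac{1}{L}\log|\Pi^\star(\mathcal G)|$ in the exponent; you then take $L\to\infty$, exactly as the paper takes $\alpha\to0$. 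Both routes deliver the matching bound and the $\epsilon$-independence of the rate.

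Two smaller points. First, the ``main obstacle'' you flag --- that a connected union graph over a window need not yield a scrambling product because edges appear at different times --- is dissolved by Assumption~\ref{assumption}(3): since every realization has diagonal entries $\geq\delta$, any edge appearing at any time in the window survives into the product $\Phi(s,t)$ with entry $\geq\delta^{s-t}$, so a connected $\Gamma(s,t)$ gives $\|\widetilde\Phi(s,t)\|\leq\bigl(1-c\,\delta^{2(s-t)}\bigr)^{1/2}$ for \emph{any} window length (this is the paper's Lemma~\ref{lemma-delta-bounds} and Corollary~\ref{corollary-just-connected}); you do not need $L\geq N$ or a spanning-tree propagation argument. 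Second, your handling of $\Pi^\star(\mathcal G)=\emptyset$ is sound: since $\mathcal G$ is finite, any disconnected collection extends to a maximal one, so emptiness of $\Pi^\star(\mathcal G)$ forces every realization to be connected, every block is good, and the norm decays deterministically, giving $I=+\infty$.
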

To prove Theorem \ref{theorem-main}, we first consider the case when $\Pi^\star(\mathcal G)$ is nonempty, and thus when $p_{\max}>0$. In this case, we find the rate $I$ by showing the lower and the upper bounds:
\begin{eqnarray}
\label{eqn-lower-bound}
\liminf_{k \rightarrow \infty} \frac{1}{k} \log \mathbb P\left( \left\|\widetilde \Phi(k,0)\right\| \geq \epsilon \right)
&\geq& \log p_{\mathrm{max}}\\
\label{eqn-upper-bound}
\limsup_{k \rightarrow \infty} \frac{1}{k} \log \mathbb P\left( \left\|\widetilde \Phi(k,0)\right\| \geq \epsilon \right)
&\leq& \log p_{\mathrm{max}}.
\end{eqnarray}
Subsection \ref{sub-Sec-Lowerbound} proves the lower bound \eqref{eqn-lower-bound},
and subsection \ref{sub-Sec-upper-bound} proves the upper bound \eqref{eqn-upper-bound}.
%
%
%

\vspace{-1.5mm}
\subsection{Proof of the lower bound \eqref{eqn-lower-bound}}
\label{sub-Sec-Lowerbound}
We first find the rate for the probability that
the network stays disconnected over the interval $1,...,k$.
\begin{lemma}
\label{lemma-disconnected}
\vspace{-2mm}
\[\lim_{k \rightarrow \infty} \frac{1}{k} \log \mathbb P\left( \Gamma(k,0)\; \mathrm{is\; disconnected}\, \right)=\log p_{\mathrm{max}}.\]
%
\end{lemma}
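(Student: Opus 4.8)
The plan is to establish the two matching bounds
$\liminf_{k\to\infty}\frac1k\log\mathbb P(\Gamma(k,0)\text{ disconnected})\ge \log p_{\max}$
and
$\limsup_{k\to\infty}\frac1k\log\mathbb P(\Gamma(k,0)\text{ disconnected})\le \log p_{\max}$,
exploiting the observation that $\Gamma(k,0)$ is disconnected essentially if and only if all the graph realizations $G_1,\dots,G_k$ lie in a common disconnected collection. Since we are in the regime $\Pi^\star(\mathcal G)\neq\emptyset$, we have $p_{\max}>0$, so $\log p_{\max}$ is finite.

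For the lower bound I would fix a maximal disconnected collection $\mathcal H^\star\in\Pi^\star(\mathcal G)$ attaining the maximum, i.e.\ $p_{\mathcal H^\star}=p_{\max}$. Because the graph process is i.i.d., the probability that $G_r\in\mathcal H^\star$ for every $r=1,\dots,k$ equals $\prod_{r=1}^{k}\mathbb P(G_r\in\mathcal H^\star)=p_{\max}^{\,k}$. On this event, $\Gamma(k,0)=\Gamma(\{G_1,\dots,G_k\})$ is a subgraph of $\Gamma(\mathcal H^\star)$, which is disconnected by definition of a disconnected collection, so $\Gamma(k,0)$ is itself disconnected. Hence $\mathbb P(\Gamma(k,0)\text{ disconnected})\ge p_{\max}^{\,k}$, and dividing by $k$ and taking logarithms gives $\frac1k\log\mathbb P(\Gamma(k,0)\text{ disconnected})\ge\log p_{\max}$ for every $k$, so the $\liminf$ is at least $\log p_{\max}$.

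For the upper bound I would invoke Observation~\ref{obs-there-exists-maximal-for-any-disc}: whenever $\Gamma(k,0)$ is disconnected there exists a maximal collection $\mathcal H\in\Pi^\star(\mathcal G)$ with $G_r\in\mathcal H$ for all $r=1,\dots,k$. Therefore the event $\{\Gamma(k,0)\text{ disconnected}\}$ is contained in $\bigcup_{\mathcal H\in\Pi^\star(\mathcal G)}\{G_r\in\mathcal H,\ r=1,\dots,k\}$. A union bound together with the i.i.d.\ property yields $\mathbb P(\Gamma(k,0)\text{ disconnected})\le\sum_{\mathcal H\in\Pi^\star(\mathcal G)}p_{\mathcal H}^{\,k}\le|\Pi^\star(\mathcal G)|\,p_{\max}^{\,k}$. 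The point that makes this bound useful is that $\mathcal G$ consists of graphs on the fixed vertex set $V$ of size $N$, so $|\mathcal G|\le 2^{\binom{N}{2}}$ and $|\Pi^\star(\mathcal G)|\le 2^{|\mathcal G|}$ is a finite constant independent of $k$. Taking $\frac1k\log$ gives $\frac1k\log\mathbb P(\Gamma(k,0)\text{ disconnected})\le\frac1k\log|\Pi^\star(\mathcal G)|+\log p_{\max}$, and letting $k\to\infty$ the first term vanishes, so $\limsup\le\log p_{\max}$.

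Combining the two bounds shows the limit exists and equals $\log p_{\max}$. I do not anticipate a serious obstacle: the only step requiring genuine care is the equivalence between disconnectedness of $\Gamma(k,0)$ and membership of all realizations in a common maximal collection. One direction is immediate (if all $G_r$ lie in a disconnected collection, their supergraph is disconnected), and the other direction is exactly Observation~\ref{obs-there-exists-maximal-for-any-disc}; the rest is i.i.d.\ bookkeeping and a union bound over the finite family $\Pi^\star(\mathcal G)$, whose finiteness is what guarantees the $\frac1k\log|\Pi^\star(\mathcal G)|$ correction term is asymptotically negligible.
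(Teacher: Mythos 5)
Your proposal is correct and follows essentially the same route as the paper: the lower bound via the event that all realizations lie in a most likely maximal collection, and the upper bound via Observation~\ref{obs-there-exists-maximal-for-any-disc} plus a union bound over the finite family $\Pi^\star(\mathcal G)$. Your explicit remark that $|\Pi^\star(\mathcal G)|$ is finite because $\mathcal G$ consists of graphs on $N$ fixed vertices is a small point the paper leaves implicit, but it is not a different argument.
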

Having Lemma \ref{lemma-disconnected}, the lower bound \eqref{eqn-lower-bound} follows from the following relation:
\begin{eqnarray*}
\label{eq-lower-bound-first}
\mathbb P\left(\left\|\widetilde \Phi(k,0)\right\|\geq \epsilon\right)&\geq&  \mathbb P\left(\left\|\widetilde \Phi(k,0)\right\|= 1\right)\\
&=& \mathbb P\left( \Gamma(k,0)\; \mathrm{is\; disconnected}\, \right),
\end{eqnarray*}
that is stated and proven in Lemma~\ref{lemma-the-main-equivalence} further ahead.

\begin{proof}[Proof of Lemma \ref{lemma-disconnected}]If all the graph realizations until time $k$ belong to a certain maximal collection $\mathcal H$, by definition of a maximal collection, $\Gamma(k,0)$ is disconnected with probability $1$. Therefore, for any maximal collection $\mathcal H$, the following bound holds:
\begin{eqnarray}
\label{eq-lower-bound-second}
 \mathbb P\left( \Gamma(k,0)\; \mathrm{is\; disconnected}\, \right)\geq  \mathbb P\left( G_t \in \mathcal H, \;\forall\,t=1,\ldots,k  \right) = p_{\mathcal H}^k. \nonumber
\end{eqnarray}
The best bound, over all maximal collections $\mathcal H$, is the one that corresponds to the ``most likely'' maximal collection:
\begin{equation}
\label{eq-lowerB-for-disconnected}
 \mathbb P\left( \Gamma(k,0)\; \mathrm{is\; disconnected}\, \right) \geq p_{\max}^k.
\end{equation}

We will next show that an upper bound with the same rate of decay (equal to $p_{\max}$) holds for the probability of the network staying disconnected. To show this, we reason as follows: if $\Gamma(k,0)$ is disconnected, then all the graph realizations until time $k$, $G_1,\ldots,G_k$, belong to some maximal collection.
It follows that
\begin{eqnarray*}
\mathbb P\left(\Gamma(k,0)\;\mathrm{is \;disconnected\,}\right)&=&
\mathbb P\left(\bigcup_{\mathcal H \in \Pi^\star(\mathcal G)}\left\{ G_t\in \mathcal H,\;\mathrm{for}\; t=1,\ldots,k\right\}\right)\\
&\leq&\sum_{\mathcal H \in \Pi^\star(\mathcal G)} \mathbb P\left( G_t\in \mathcal H,\;\mathrm{for}\; t=1,\ldots,k\right)\\
&=& \sum_{\mathcal H \in \Pi^\star(\mathcal G)} p_{\mathcal H}^k.
\end{eqnarray*}
Finally, we bound each term in the previous sum by the probability $p_{\max}$ of the most likely maximal collection, and we obtain:
\begin{equation}
\label{eq-upperB-for-disconnected}
\mathbb P\left(\Gamma(k,0)\;\mathrm{is \;disconnected}\right) \leq \left|\Pi^\star(\mathcal G) \right|  p_{\max}^k,
\end{equation}
where $\left|\Pi^\star(\mathcal G) \right|$ is the number of maximal collections on $\mathcal G$.

Combining~\eqref{eq-lowerB-for-disconnected} and~\eqref{eq-upperB-for-disconnected} we get:
\begin{equation*}
p_{\max}^k \leq \mathbb P\left(\Gamma(k,0)\;\mathrm{is \;disconnected}\right) \leq \left|\Pi^\star(\mathcal G) \right|  p_{\max}^k.
\end{equation*}
which implies
\begin{equation*}
\lim_{k\rightarrow \infty} \frac{1}{k}\log \mathbb P\left(\Gamma(k,0)\;\mathrm{is \;disconnected}\right)=\log p_{\max}.
\end{equation*}
\vskip -5mm
\end{proof}
\subsection{Proof for the upper bound in \eqref{eqn-upper-bound}}
\label{sub-Sec-upper-bound}
The next lemma relates the products of the weight matrices $\Phi(s,t)$ with the corresponding graph $\Gamma(s,t)$ and is the key point in our analysis.
Recall that
$\widetilde{\Phi}(s,t):=\Phi(s,t)-J.$
\begin{lemma}
\label{lemma-delta-bounds} For any realization of matrices $W_r\in \mathcal W$, $r=t+1,\ldots,s$, $s>t$:\footnote{The statements of the results in subsequent Corollary~\ref{corollary-just-connected} and Lemma~\ref{lemma-the-main-equivalence} are also in the point-wise sense.}
\begin{enumerate}
\item \label{lemma-delta-bounds-part-a}if $[\Phi(s,t)]_{ij}>0$, $i\neq j$, then $[\Phi(s,t)]_{ij}\geq\delta^{s-t}$;
\item \label{lemma-delta-bounds-part-b} for $i=1,\ldots,N$ $[\Phi(s,t)]_{ii}\geq \delta^{s-t}$;
\item \label{eq-delta-bound-entries} if $[\Phi(s,t)]_{ij}>0$, $i\neq j$, then  $\left[\Phi(s,t)^\top\Phi(s,t)\right]_{ij}\geq \delta^{2(s-t)};$
\item \label{eq-delta-bound-spectral-radius}
$\|\widetilde \Phi(s,t)\|\leq \left(1-\delta^{2(s-t)}\lambda_{\mathrm F}\left(L \left(\Gamma(s,t)\right)\right)\right)^{\frac{1}{2}},$
\end{enumerate}
where $L(G)$ is the Laplacian matrix of the graph $G$, and
$\lambda_{\mathrm{F}}(A)$ is the second smallest eigenvalue (the Fiedler eigenvalue) of a positive semidefinite matrix $A$.
\end{lemma}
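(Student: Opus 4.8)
The plan is to get parts~\ref{lemma-delta-bounds-part-a}--\ref{eq-delta-bound-entries} by elementary estimates on nonnegative matrix products, and then to deduce part~\ref{eq-delta-bound-spectral-radius} from a spectral identity that reinterprets $I-\Phi(s,t)^\top\Phi(s,t)$ as a weighted graph Laplacian dominating $\delta^{2(s-t)}L(\Gamma(s,t))$.

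For parts~\ref{lemma-delta-bounds-part-a} and~\ref{lemma-delta-bounds-part-b} I would argue jointly by induction on $n:=s-t$. For $n=1$ the two claims are exactly the second and third items of Assumption~\ref{assumption}. For the inductive step write $\Phi(s,t)=W_s\,\Phi(s-1,t)$ and expand $[\Phi(s,t)]_{ij}=\sum_{\ell}[W_s]_{i\ell}[\Phi(s-1,t)]_{\ell j}$; every summand is nonnegative since each $W_r$, hence every partial product, is entrywise nonnegative. For the diagonal bound keep only the $\ell=i$ term and combine $[W_s]_{ii}\ge\delta$ with $[\Phi(s-1,t)]_{ii}\ge\delta^{n-1}$. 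For the off-diagonal bound, if $[\Phi(s,t)]_{ij}>0$ then at least one summand $[W_s]_{i\ell}[\Phi(s-1,t)]_{\ell j}$ is strictly positive; depending on whether $\ell=i$, $\ell=j$, or $\ell\notin\{i,j\}$, one factor is a positive diagonal or off-diagonal entry of $W_s$ (bounded below by $\delta$ via Assumption~\ref{assumption}) while the other is bounded below by $\delta^{n-1}$ using the induction hypothesis (part~\ref{lemma-delta-bounds-part-b} when the remaining index pair is diagonal, part~\ref{lemma-delta-bounds-part-a} otherwise), so the product, and hence the whole sum, is at least $\delta^{n}$. Part~\ref{eq-delta-bound-entries} is then immediate: in $[\Phi^\top\Phi]_{ij}=\sum_\ell[\Phi]_{\ell i}[\Phi]_{\ell j}$ the $\ell=i$ term equals $[\Phi]_{ii}[\Phi]_{ij}\ge\delta^{s-t}\cdot\delta^{s-t}$ by parts~\ref{lemma-delta-bounds-part-b} and~\ref{lemma-delta-bounds-part-a}, and all other terms are nonnegative.

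For part~\ref{eq-delta-bound-spectral-radius}, write $\Phi=\Phi(s,t)$, $\widetilde\Phi=\widetilde\Phi(s,t)=\Phi-J$. Since each $W_r$ is doubly stochastic and symmetric, $\Phi$ is doubly stochastic and $M:=\Phi^\top\Phi$ is symmetric, positive semidefinite, and doubly stochastic. Using $\Phi 1=1$, $1^\top\Phi=1^\top$ and $J=\tfrac1N 11^\top$, a direct expansion gives $\widetilde\Phi^\top\widetilde\Phi=(\Phi-J)^\top(\Phi-J)=M-J$, so $\|\widetilde\Phi\|^2=\lambda_{\max}(M-J)$. Since $1$ is an eigenvector of both $M$ (with eigenvalue $1$, the spectral radius of the doubly stochastic $M$) and $J$ (with eigenvalue $1$), the matrices $M-J$ and $I-M$ both vanish on $1$ and leave $1^\perp$ invariant, whence $\|\widetilde\Phi\|^2=\lambda_{\max}(M-J)=1-\lambda_{\mathrm F}(I-M)$, where $\lambda_{\mathrm F}(I-M)$ is the second smallest eigenvalue of the positive semidefinite $I-M$. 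Next I would observe that $I-M$ is itself the Laplacian of the weighted graph on $V$ that puts weight $M_{ij}\ge 0$ on each pair $\{i,j\}$: its off-diagonal entries are $-M_{ij}\le 0$ and all its row sums are $0$ since $M$ is stochastic. The crux is the comparison $I-M \succeq \delta^{2(s-t)}\,L(\Gamma(s,t))$. For this, every edge $\{i,j\}$ of $\Gamma(s,t)$ is active in some $W_r$ with $t<r\le s$, and then the single term in the expansion of $[\Phi]_{ij}$ that stays at $i$ down to step $r$, jumps $i\to j$ at step $r$, and stays at $j$ afterwards is a product of positive diagonal entries and the one positive entry $[W_r]_{ij}$, so $[\Phi]_{ij}>0$; part~\ref{eq-delta-bound-entries} then gives $M_{ij}\ge\delta^{2(s-t)}$ on every edge of $\Gamma(s,t)$, while $M_{ij}\ge 0$ on all pairs. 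Writing both Laplacians in the form $\sum_{\{i,j\}} w_{ij}(e_i-e_j)(e_i-e_j)^\top$ with $e_i$ the standard basis vectors, the difference $I-M-\delta^{2(s-t)}L(\Gamma(s,t))$ is a nonnegative combination of the rank-one positive semidefinite matrices $(e_i-e_j)(e_i-e_j)^\top$, hence positive semidefinite. Finally, since both $I-M$ and $L(\Gamma(s,t))$ annihilate $1$, the Courant--Fischer min characterization on $1^\perp$ gives $\lambda_{\mathrm F}(I-M)\ge\delta^{2(s-t)}\lambda_{\mathrm F}(L(\Gamma(s,t)))$, and substituting into $\|\widetilde\Phi\|^2=1-\lambda_{\mathrm F}(I-M)$ yields the stated bound.

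I expect parts~\ref{lemma-delta-bounds-part-a}--\ref{eq-delta-bound-entries} to be routine bookkeeping; the real content, and the main obstacle, is part~\ref{eq-delta-bound-spectral-radius}: getting the exact identity $\widetilde\Phi^\top\widetilde\Phi=M-J$, recognizing $I-M$ as a genuine weighted Laplacian (which relies on double stochasticity), and arranging the positive semidefinite domination so that the restriction of Courant--Fischer to $1^\perp$ applies cleanly — being careful that when $\Gamma(s,t)$ is disconnected the inequality is trivially true because $\lambda_{\mathrm F}(L(\Gamma(s,t)))=0$.
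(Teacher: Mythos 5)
Your proof is correct and follows essentially the same route as the paper: parts 1--3 by the same elementary entrywise bounds (the paper simply cites a reference for the induction you write out), and part 4 by restricting the Rayleigh quotient of $\Phi(s,t)^\top\Phi(s,t)$ to $1^\perp$ and comparing with the Laplacian of $\Gamma(s,t)$ edge by edge. Your Loewner-order phrasing $I-\Phi^\top\Phi\succeq\delta^{2(s-t)}L(\Gamma(s,t))$ plus Courant--Fischer is just a repackaging of the paper's quadratic-form identity $q^\top\Phi^\top\Phi q=q^\top q-\sum_{\{i,j\}}[\Phi^\top\Phi]_{ij}(q_i-q_j)^2$, so there is nothing substantively different to compare.
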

\begin{proof} Parts~\ref{lemma-delta-bounds-part-a} and~\ref{lemma-delta-bounds-part-b} are a consequence of the fact that the positive entries of the weight matrices are bounded below by $\delta$ by Assumption~\ref{assumption}; for the proofs of~\ref{lemma-delta-bounds-part-a} and~\ref{lemma-delta-bounds-part-b}, see~\cite{Angelia}, Lemma~1 a), b). Part~\ref{eq-delta-bound-entries} follows from parts~\ref{lemma-delta-bounds-part-a} and~\ref{lemma-delta-bounds-part-b}, by noticing that, for all $\{i,j\}$, $i\neq j$, such that $[\Phi(s,t)]_{ij}>0$, we have:
\begin{eqnarray*}
\left[\Phi(s,t)^\top\Phi(s,t)\right]_{ij}&=&\sum_{l=1}^N [\Phi(s,t)]_{li} [\Phi(s,t)]_{lj}\\
 &\geq& [\Phi(s,t)]_{ii} [\Phi(s,t)]_{ij}\\
&\geq& \left(\delta^{s-t}\right)^2.
\end{eqnarray*}

To show part~\ref{eq-delta-bound-spectral-radius}, we notice first that $\left\|\widetilde \Phi(s,t)\right\|^2$ is the second largest eigenvalue of $\Phi(s,t)^\top \Phi(s,t)$, and, thus, can be computed as
\vskip -3mm
\begin{eqnarray*}
\left\|\widetilde \Phi(s,t)\right\|^2 &=& \max_{q^\top q=1,\, q \perp 1 } q^\top \Phi(s,t)^\top \Phi(s,t) q\\
\end{eqnarray*}
\vskip -7mm
Since $\Phi(s,t)^\top \Phi(s,t)$ is a symmetric stochastic matrix, it can be shown, e.g.,~\cite{MurraySwitching}, that its quadratic form can be written as:
\begin{eqnarray}
\label{eq-quadratic-form-of-Phi}
q^\top \Phi(s,t)^\top \Phi(s,t) q &=& q^\top q - \sum_{\{i,j\}} \left[\Phi(s,t)^\top \Phi(s,t)\right]_{ij}\left(q_i-q_j\right)^2\nonumber \\
&\leq& 1 -  \delta^{2(s-t)} \sum_{\{i,j\}: \left[\Phi(s,t)^\top \Phi(s,t)\right]_{ij}>0} \left(q_i-q_j\right)^2.
\end{eqnarray}
where the last inequality follows from part~\ref{eq-delta-bound-entries}. Further, if the graph $\Gamma(s,t)$ contains some link $\{i,j\}$, then, at some time $r$, $t<r\leq s$, a realization $W_r$ with $[W_r]_{ij}>0$ occurs. Since the diagonal entries of all the realizations of the weight matrices are positive (and, in particular, those from time $r$ to time $s$), the fact that $[W_r]_{ij}>0$ implies that $[\Phi(s,t)]_{ij}>0$. This, in turn, implies
\begin{equation*}
\left\{\{i,j\} \in \Gamma(s,t)\right\}\subseteq \left\{\{i,j\}: \left[\Phi(s,t)^\top \Phi(s,t)\right]_{ij}>0 \right\}.
\end{equation*}
Using the latter, and the fact that the entries of $\Phi(s,t)^\top \Phi(s,t)$ are non-negative, we can bound the sum in~\eqref{eq-quadratic-form-of-Phi} over $\left\{\{i,j\}: \left[\Phi(s,t)^\top \Phi(s,t)\right]_{ij}>0 \right\}$ by the sum over $\left\{\{i,j\} \in \Gamma(s,t)\right\}$ only, yielding
\begin{equation*}
q^\top \Phi(s,t)^\top \Phi(s,t) q \leq 1 - \delta^{2(s-t)} \sum_{\{i,j\} \in \Gamma(s,t)}\left(q_i-q_j\right)^2.
\end{equation*}
Finally, $\min_{q^\top q=1,\, q \perp 1 }\sum_{\{i,j\} \in \Gamma(s,t)}\left(q_i-q_j\right)^2$ is equal to the Fiedler eigenvalue (i.e., the second smallest eigenvalue) of the Laplacian $L(\Gamma(s,t))$. This completes the proof of part~\ref{eq-delta-bound-spectral-radius} and Lemma~\ref{lemma-delta-bounds}.
\end{proof}
We have the following corollary of part~\ref{eq-delta-bound-spectral-radius} of Lemma~\ref{lemma-delta-bounds}, which, for a fixed interval length $s-t$, and for the case when $\Gamma(s,t)$ is connected, gives a uniform bound for the spectral norm of $\widetilde\Phi(s,t)$.
\begin{corollary}
\label{corollary-just-connected}
For any $s$ and $t$, $s>t$, if $\Gamma(s,t)$ is connected, then
\begin{equation}
\label{eq-uniform-bound-for-connected-s-t-intervals}
\left\|\widetilde \Phi(s,t)\right\| \leq
\left(1-c \delta^{2(s-t)}\right)^{\frac{1}{2}},
\end{equation}
where $c=2(1-\cos\frac{\pi}{N})$ is the Fiedler value of the path graph on $N$ vertices, i.e.,
the minimum of $\lambda_{\mathrm F}(L(G))>0$
over all connected graphs on $N$ vertices~\cite{min-connectivity} .
\end{corollary}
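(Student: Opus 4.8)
The plan is to obtain Corollary~\ref{corollary-just-connected} as an immediate consequence of part~\ref{eq-delta-bound-spectral-radius} of Lemma~\ref{lemma-delta-bounds}, once we supply a realization-independent lower bound on the Fiedler value of a connected graph on $N$ vertices. First I would apply Lemma~\ref{lemma-delta-bounds}, part~\ref{eq-delta-bound-spectral-radius}, to the realization at hand, which gives
\[
\left\|\widetilde\Phi(s,t)\right\| \le \left(1 - \delta^{2(s-t)}\,\lambda_{\mathrm F}\!\left(L(\Gamma(s,t))\right)\right)^{1/2}.
\]
Since the hypothesis of the corollary is that $\Gamma(s,t)$ is connected, its Laplacian has a simple zero eigenvalue and hence $\lambda_{\mathrm F}(L(\Gamma(s,t))) > 0$, so the bound is already a genuine contraction; what remains is to make it uniform over all realizations whose supergraph on the interval $(t,s]$ is connected.

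To that end I would lower-bound $\lambda_{\mathrm F}(L(\Gamma(s,t)))$ by the smallest algebraic connectivity achievable by any connected graph on the vertex set $V$ (which has $|V|=N$). This extremal value is a classical quantity: any connected graph $G$ on $N$ vertices contains a spanning tree $T$, and since $L(G) = L(T) + L(G\setminus T)$ with $L(G\setminus T) \succeq 0$, monotonicity of eigenvalues under the positive semidefinite order gives $\lambda_{\mathrm F}(L(G)) \ge \lambda_{\mathrm F}(L(T))$; and among all trees on $N$ vertices the path $P_N$ minimizes the Fiedler value, with $\lambda_{\mathrm F}(L(P_N)) = 2(1-\cos\frac{\pi}{N}) =: c$ (see~\cite{min-connectivity}). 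Hence $\lambda_{\mathrm F}(L(\Gamma(s,t))) \ge c$.

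Finally, because $x \mapsto (1 - \delta^{2(s-t)}x)^{1/2}$ is nonincreasing on its domain, substituting $c$ for $\lambda_{\mathrm F}(L(\Gamma(s,t)))$ in the displayed inequality yields exactly $\|\widetilde\Phi(s,t)\| \le (1 - c\,\delta^{2(s-t)})^{1/2}$. One checks in passing that the radicand is nonnegative, which is automatic here: connectivity of $\Gamma(s,t)$ forces at least one active link in the interval, and a symmetric stochastic matrix with a positive off-diagonal entry and diagonal bounded below by $\delta$ must have $\delta \le 1/2$, so $c\,\delta^{2(s-t)} \le c\delta^2 \le \tfrac12 < 1$. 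I do not expect a real obstacle; the only nontrivial ingredient is the extremal characterization of the algebraic connectivity of connected graphs on $N$ vertices, which is standard and already cited as~\cite{min-connectivity}, and the rest is just monotonicity of the square-root bound.
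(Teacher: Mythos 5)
Your proposal is correct and follows essentially the same route as the paper: apply part~\ref{eq-delta-bound-spectral-radius} of Lemma~\ref{lemma-delta-bounds} and then lower-bound $\lambda_{\mathrm F}(L(\Gamma(s,t)))$ by $c=\min_{G\,\mathrm{connected}}\lambda_{\mathrm F}(L(G))=2(1-\cos\frac{\pi}{N})$, the Fiedler value of the path graph. The extra justifications you supply (the spanning-tree/monotonicity argument for the extremality of the path and the check that the radicand is nonnegative) are fine but not needed beyond the citation the paper already gives.
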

\begin{proof}
The claim follows from part~\ref{eq-delta-bound-spectral-radius} of Lemma~\ref{lemma-delta-bounds} and from the fact that for connected $\Gamma(s,t)$: $c=\min_{G\;\mathrm{is\;connected}} \lambda_{\mathrm F}(L(G))\leq \lambda_{\mathrm F}(L(\Gamma(s,t)))$. 
\end{proof}

The previous result, as well as part~\ref{eq-delta-bound-spectral-radius} of Lemma~\ref{lemma-delta-bounds}, imply that, if the graph $\Gamma(s,t)$ is connected, then the spectral norm of $\widetilde \Phi(s,t)$ is smaller than $1$. It turns out that the connectedness of $\Gamma(s,t)$ is not only sufficient, but it is also a necessary condition for $\left\|\widetilde \Phi(s,t)\right\|<1$. Lemma~\ref{lemma-the-main-equivalence} explains this.
\begin{lemma}
\label{lemma-the-main-equivalence}
For any $s$ and $t$, $s>t$:
\[\left\|\widetilde\Phi(s,t)\right\|<1\; \Leftrightarrow\; \Gamma(s,t)\; \mathrm{is\;connected.}\]
\end{lemma}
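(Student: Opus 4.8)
The plan is to prove the two implications separately, taking the forward direction ($\Rightarrow$) in contrapositive form, so that what we actually establish is: \emph{$\Gamma(s,t)$ connected $\Rightarrow \|\widetilde\Phi(s,t)\|<1$}, and \emph{$\Gamma(s,t)$ disconnected $\Rightarrow \|\widetilde\Phi(s,t)\|=1$}. Since $\|\widetilde\Phi(s,t)\|\le 1$ always, these two together are exactly the claimed equivalence.

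The ``connected'' case is immediate from the earlier results. If $\Gamma(s,t)$ is connected, Corollary~\ref{corollary-just-connected} gives $\|\widetilde\Phi(s,t)\|\le\left(1-c\,\delta^{2(s-t)}\right)^{1/2}$ with $c=2\left(1-\cos\frac{\pi}{N}\right)>0$ and $\delta>0$, hence the right-hand side is strictly less than $1$ and $\|\widetilde\Phi(s,t)\|<1$.

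For the ``disconnected'' case, I would argue as follows. Suppose $\Gamma(s,t)$ is disconnected; pick one of its connected components and let $S$, with $\emptyset\neq S\subsetneq V$, be its vertex set. By definition of $\Gamma(s,t)$, none of the graphs $G_r$, $t<r\le s$, has an edge joining $S$ and $V\setminus S$, so $[W_r]_{ij}=0$ whenever $i\in S$, $j\notin S$; that is, each $W_r$ appearing in the product $\Phi(s,t)=W_s\cdots W_{t+1}$ is block diagonal with respect to the partition $\{S,V\setminus S\}$. A product of matrices block diagonal with respect to a common partition is again block diagonal with respect to that partition, so $\Phi(s,t)$ is block diagonal w.r.t. $\{S,V\setminus S\}$; moreover, since $\Phi(s,t)$ is doubly stochastic, each of its two diagonal blocks has all row sums equal to $1$. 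Now set $v\in\mathbb R^N$ by $v_i=|V\setminus S|$ for $i\in S$ and $v_i=-|S|$ for $i\notin S$. Then $v\neq 0$ and $1^\top v=0$. Because $\Phi(s,t)$ is block diagonal with stochastic blocks, it fixes every vector that is constant on $S$ and constant on $V\setminus S$; in particular $\Phi(s,t)\,v=v$. Since $v\perp 1$ we have $Jv=0$, hence $\widetilde\Phi(s,t)\,v=\Phi(s,t)\,v-Jv=v$, which forces $\|\widetilde\Phi(s,t)\|\ge\|v\|/\|v\|=1$. On the other hand, $\Phi(s,t)^\top\Phi(s,t)$ is a symmetric doubly stochastic positive semidefinite matrix, so its eigenvalues lie in $[0,1]$, and, as shown in the proof of Lemma~\ref{lemma-delta-bounds}, $\|\widetilde\Phi(s,t)\|^2$ equals its second largest eigenvalue, whence $\|\widetilde\Phi(s,t)\|\le 1$. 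Combining the two bounds gives $\|\widetilde\Phi(s,t)\|=1$.

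I do not expect a genuine obstacle here; the only points needing a little care are extracting an honest two-block partition from ``disconnected'' (take a component together with its complement), checking that block diagonality is preserved under the matrix product, and noting that the diagonal blocks of $\Phi(s,t)$ inherit stochasticity from $\Phi(s,t)$ — all routine. The conceptual content is simply that a disconnected union graph allows no mixing across the cut, so the cut-indicator direction is never contracted by $\widetilde\Phi(s,t)$.
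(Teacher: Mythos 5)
Your proof is correct and follows essentially the same route as the paper: the connected direction via the Fiedler-value bound of Lemma~\ref{lemma-delta-bounds}/Corollary~\ref{corollary-just-connected}, and the disconnected direction via block-diagonality of each $W_r$ and hence of $\Phi(s,t)$ with respect to the cut. The only difference is that you spell out the final step (exhibiting the mean-zero cut-indicator vector $v$ fixed by $\Phi(s,t)$) which the paper leaves implicit.
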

\begin{proof}
We first show the \emph{if} part. Suppose $\Gamma(s,t)$ is connected. Then, $\lambda_{\mathrm F}\left(L \left(\Gamma(s,t)\right)\right)>0$ and the claim follows by part~\ref{eq-delta-bound-spectral-radius} of Lemma~\ref{lemma-delta-bounds}. We prove the \emph{only if} part by proving the following equivalent statement:
\[  \Gamma(s,t) \mathrm{\;is\;not\;connected} \Rightarrow  \left\|\widetilde\Phi(s,t)\right\|=1. \]
To this end, suppose that $\Gamma(s,t)$ is not connected and, without loss of generality, suppose that $\Gamma(s,t)$ has two components $C_1$ and $C_2$.  Then, for $i\in C_1$ and $j\in C_2$, $\{i,j\} \notin \Gamma(s,t)$, and, consequently, $\{i,j\} \notin G_r$,  for all $r$, $t<r\leq s$. By definition of $G_r$, this implies that the $i,j$-th entry in the corresponding weight matrix is equal to zero, i.e.,
\[  \forall r,\, t<r\leq s: [W_r]_{ij}=0, \forall \{i,j\} \mathrm{\;s.t.\;} i\in C_1,\,j\in C_2.  \]
Thus, every matrix realization $W_r$ from time $r=t+1$ to time $r=s$ has a block diagonal form (up to a symmetric permutation of rows and columns)  \[W_r=\left[ \begin{array}{cc}
[W_r]_{C_1} & 0  \\
0 & [W_r]_{C_2}  \end{array} \right],\] where $[W_r]_{C_1}$ is the block of $W_r$ corresponding to the nodes in $C_1$, and similarly for $[W_r]_{C_2}$. This implies that $\Phi(s,t)$ will have the same block diagonal form, which, in turn, proves that
$\left\|\widetilde \Phi(s,t)\right\|=1$. This completes the proof of the only if part and the proof of Lemma~\ref{lemma-the-main-equivalence}.
\end{proof}
%
%
%
We next define the sequence of stopping times $T_i$, $i=1,2,\ldots$ by:
\begin{align}
T_i&=\min\{t\geq T_{i-1}+1:\; \Gamma(t,T_{i-1}) \;\mathrm{is\;connected} \},\; \mathrm{for}\;i\geq 1,\\
T_0&=0. \nonumber
\end{align}
The sequence $\{T_i\}_{i\geq 1}$ defines the times when the network becomes connected, and, equivalently, when the averaging process makes an improvement (i.e., when the spectral radius of $\widetilde \Phi$ drops below $1$).

For fixed time $k\geq 1$, let $M_k$ denote the number of improvements until time $k$:
\begin{equation}
M_k=\max\left\{i\geq 0: T_i\leq k\right\}.
\end{equation}
We now explain how, at any given time $k$, we can use the knowledge of $M_k$ to bound the norm of the ``error'' matrix $\widetilde \Phi(k,0)$. Suppose that $M_k=m$. If we knew the locations of all the improvements until time $k$, $T_i=t_i$, $i=1,\ldots,m$ then, using eq.~\eqref{eq-uniform-bound-for-connected-s-t-intervals}, we could bound the norm of $\widetilde \Phi(k,0)$. Intuitively, since for fixed $k$ and fixed $m$ the number of allocations of $T_i$'s is finite, there will exist the one which yields the worst bound on $\left\|\widetilde \Phi(k,0)\right\|$. It turns out that the worst case allocation is the one with equidistant improvements, thus allowing for deriving a bound on $\left\|\widetilde \Phi(k,0)\right\|$ only in terms of $M_k$. This result is given in Lemma~\ref{lemma-Mk=m-bound}.
\begin{lemma}
\label{lemma-Mk=m-bound} For any realization of $W_1,W_2,\ldots,W_k$ and $k=1,2,\ldots$ the following holds:
\begin{equation}
\label{eq-lemma-condition-on-Mk=m}
\left\|\widetilde \Phi(k,0)\right\|\leq \left(1-c\delta^{2\frac{k}{M_k}}\right)^\frac{M_k}{2}.
\end{equation}
\end{lemma}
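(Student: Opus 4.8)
The plan is to cut the product $\Phi(k,0)$ at the ``improvement times'' $T_1<\cdots<T_m$ (writing $m:=M_k$), bound each resulting block by Corollary~\ref{corollary-just-connected}, and then optimize the product of block bounds over the admissible block lengths; the worst allocation will turn out to be the equidistant one, as anticipated in the discussion preceding the lemma.

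I would first record the elementary identity that for $N\times N$ doubly stochastic $A,B$ one has $(A-J)(B-J)=AB-AJ-JB+J^2=AB-J$, since $AJ=JB=J^2=J$. As each $W_r$ is doubly stochastic, so is every partial product $\Phi(s,t)$, and iterating the identity along the times $T_0=0<T_1<\cdots<T_m\le k$ gives
\[
\widetilde\Phi(k,0)=\widetilde\Phi(k,T_m)\,\widetilde\Phi(T_m,T_{m-1})\cdots\widetilde\Phi(T_1,T_0).
\]
(The case $m=M_k=0$ is trivial: the right-hand side of \eqref{eq-lemma-condition-on-Mk=m} is then $1$, while $\|\widetilde\Phi(k,0)\|\le 1$ always, the norm of $\Phi-J$ never exceeding $1$ for doubly stochastic $\Phi$.) By submultiplicativity of the spectral norm and $\|\widetilde\Phi(k,T_m)\|\le 1$,
\[
\|\widetilde\Phi(k,0)\|\le\prod_{i=1}^m\|\widetilde\Phi(T_i,T_{i-1})\| .
\]
For each $i$ the graph $\Gamma(T_i,T_{i-1})$ is connected by the definition of the stopping time $T_i$, so Corollary~\ref{corollary-just-connected} gives $\|\widetilde\Phi(T_i,T_{i-1})\|\le(1-c\,\delta^{2d_i})^{1/2}$ with $d_i:=T_i-T_{i-1}\ge 1$. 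Since $\sum_{i=1}^m d_i=T_m\le k$, it remains to prove the analytic inequality
\[
\prod_{i=1}^m\bigl(1-c\,\delta^{2d_i}\bigr)\le\bigl(1-c\,\delta^{2k/m}\bigr)^{m}
\qquad\text{for } d_i\ge 1,\ \textstyle\sum_i d_i\le k .
\]

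The crux is to show that $h(x):=\log\bigl(1-c\,\delta^{2x}\bigr)$ is increasing and concave on $[1,\infty)$. (It is well defined there: in any model with a non-trivial graph one has $\delta\le\frac12$, while $c=2(1-\cos\frac{\pi}{N})\le 2$, so $c\,\delta^{2x}\le c\,\delta^2<1$; otherwise $M_k\equiv 0$ and the lemma is trivial.) Monotonicity follows from $h'(x)=-2(\log\delta)\,c\,\delta^{2x}/(1-c\,\delta^{2x})>0$. For concavity, write $h(x)=g(2x\log\delta)$ with $g(u)=\log(1-c\,e^{u})$; then $g''(u)=-c\,e^{u}/(1-c\,e^{u})^2<0$, so $g$ is concave and $h$, a concave function precomposed with an affine map, is concave as well. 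Jensen's inequality then gives $\frac1m\sum_{i=1}^m h(d_i)\le h\!\bigl(\frac1m\sum_i d_i\bigr)$, and since $1\le\frac1m\sum_i d_i\le\frac km$ with $h$ increasing, $h\!\bigl(\frac1m\sum_i d_i\bigr)\le h(k/m)$. Hence $\sum_i h(d_i)\le m\,h(k/m)$; exponentiating yields the displayed product inequality, and taking positive square roots together with the chain of norm bounds gives $\|\widetilde\Phi(k,0)\|\le(1-c\,\delta^{2k/M_k})^{M_k/2}$, which is \eqref{eq-lemma-condition-on-Mk=m}.

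The only genuinely substantive step is the concavity of $h$ (equivalently, the fact that equidistant improvements give the worst bound); everything else is the multiplicative splitting, the per-block estimate from Corollary~\ref{corollary-just-connected}, and Jensen. One minor point worth stating explicitly is that $k/M_k\ge 1$ holds automatically—because $k\ge T_{M_k}=\sum_i d_i\ge M_k$—which is what both keeps the right-hand side of \eqref{eq-lemma-condition-on-Mk=m} real and nonnegative and lets me invoke monotonicity of $h$ in the final step.
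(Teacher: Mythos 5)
Your proof is correct and follows essentially the same route as the paper's: factor $\widetilde\Phi(k,0)$ at the stopping times $T_1,\dots,T_{M_k}$, apply Corollary~\ref{corollary-just-connected} to each block, and show the equidistant allocation of block lengths is worst via concavity of $x\mapsto\log(1-c\,\delta^{2x})$ (the paper phrases this as a relaxed concave maximization with optimum at $\beta_i=1/m$, which is exactly your Jensen step). You additionally make explicit several points the paper leaves implicit — the telescoping identity $(A-J)(B-J)=AB-J$, the $M_k=0$ edge case, and the verification that $c\,\delta^{2x}<1$ — which is welcome but does not change the argument.
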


\begin{proof} Suppose $M_k=m$ and $T_1=t_1, \;T_2=t_2,\;\ldots,\;T_m=t_m\leq k$
($T_i>k$, for $i>m$, because $M_k=m$). Then, by Corollary~\ref{corollary-just-connected}, for $i=1,\ldots,m$, we have
$\left\|\widetilde \Phi(t_i,t_{i-1})\right\|\leq \left(1-c\delta^{2(t_i-t_{i-1})}\right)^\frac{1}{2}$. Combining this with submultiplicativity of the spectral norm, we get:
\begin{eqnarray}
\left\|\widetilde\Phi(k,0)\right\|&=&\left\|\widetilde\Phi(k,t_m) \widetilde\Phi(t_m, t_{m-1})\cdots \widetilde \Phi(t_1,0)\right\|\\
&\leq& \left\|  \widetilde\Phi(k,t_m)\right\|   \left\| \widetilde\Phi(t_m, t_{m-1})\right\|\cdots
\left\|\widetilde \Phi(t_1,0)\right\|\nonumber\\
&\leq& \prod_{i=1}^m \left(1-c\,\delta^{2(t_i-t_{i-1})}\right)^\frac{1}{2}\nonumber.
\end{eqnarray}
  To show~\eqref{eq-lemma-condition-on-Mk=m}, we find the worst case of $t_i$'s, $i=1,\ldots,m$ by solving the following problem:
\begin{eqnarray}
\max_{\left\{\sum_{i=1}^m \Delta_i\leq k,
\:\Delta_i\geq 1\right\}}\prod_{i=1}^m \left(1-c\,\delta^{2\Delta_i}\right)&=&
\max_{\left\{\sum_{i=1}^m \beta_i\leq 1, \beta_i\in\{\frac{1}{k},\frac{2}{k},\dots,1\} \right\}}
\prod_{i=1}^m \left(1-c\,\delta^{2 \beta_i k}\right) \nonumber\\
&\leq & \max_{\left\{\sum_{i=1}^m \beta_i\leq 1, \beta_i\geq 0 \right\}}
\prod_{i=1}^m \left(1-c\,\delta^{2 \beta_i k}\right)
\end{eqnarray}
(here $\Delta_i$ should be thought of as $t_i-t_{i-1}$). Taking the $\log$ of the cost function, we get a convex problem equivalent to the original one (it can be shown that the cost function is concave). The maximum is achieved for $\beta_i=\frac{1}{m}$, $i=1,\ldots,m$. This completes the proof of Lemma~\ref{lemma-Mk=m-bound}.
\end{proof}

Lemma~\ref{lemma-Mk=m-bound} provides a bound on the norm of the ``error'' matrix $\widetilde \Phi(k,0)$ in terms of the number of improvements $M_k$ up to time $k$. Intuitively, if $M_k$ is high enough relative to $k$, then the norm of $\widetilde \Phi(k,0)$ cannot stay above $\epsilon$ as $k$ increases (to see this, just take $M_k=k$ in eq.~\eqref{eq-lemma-condition-on-Mk=m}). We show that this is indeed true for all random sequences $G_1,G_2,\ldots$ for which $M_k=\alpha k$ or higher, for any choice of $\alpha \in (0,1]$; this result is stated in Lemma~\ref{lemma-alpha-bounds}, part~\ref{lemma-case-above-alpha-k}. On the other hand, if the number of improvements is less than $\alpha k$, then there are at least $k-\alpha k$ available slots in the graph sequence in which the graphs from the maximal collection can appear. This yields, in a crude approximation, the probability of $p_{\max}^{k-\alpha k}$ for the event $M_k\leq \alpha k$; part~\ref{lemma-case-above-alpha-k} of Lemma~\ref{lemma-alpha-bounds} gives the exact bound on this probability in terms of $\alpha$. We next state Lemma~\ref{lemma-alpha-bounds}.

\begin{lemma}
\label{lemma-alpha-bounds}

Consider the sequence of events $\left\{ M_k\geq \alpha k \right\}$, where $\alpha \in(0,1]$, $k=1,2,\ldots$.
For every $\alpha,\epsilon \in (0,1]$:
\begin{enumerate}
\item \label{lemma-case-above-alpha-k}
There exists sufficiently large $k_0=k_0(\alpha,\epsilon)$ such that
\begin{equation}
\mathbb P \left(\left\|\widetilde \Phi(k,0)\right\|\geq \epsilon,\; M_k \geq \alpha k \right)=0,\;\;\;\; \forall k\geq k_0(\alpha,\epsilon)
\end{equation}
\item \label{lemma-case-below-alpha-k}
\begin{equation}
\limsup_{k\rightarrow \infty} \frac{1}{k}\log \mathbb P \left(\left\|\widetilde \Phi(k,0)\right\|\geq \epsilon,\; M_k < \alpha k \right)\leq
-\alpha \log \alpha +\alpha \log |\Pi^\star(\mathcal G)| +(1-\alpha) \log p_{\max}.
\end{equation}
\end{enumerate}
\end{lemma}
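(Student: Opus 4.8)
The plan is to treat the two parts separately, both leaning on Lemma~\ref{lemma-Mk=m-bound}, Lemma~\ref{lemma-disconnected}, and elementary counting.

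\emph{Part~\ref{lemma-case-above-alpha-k}.} The idea is that the bound in eq.~\eqref{eq-lemma-condition-on-Mk=m} is decreasing in $M_k$ once $M_k$ is large relative to $k$. First I would observe that on the event $\{M_k \ge \alpha k\}$, since the map $m \mapsto (1-c\delta^{2k/m})^{m/2}$ is monotone in the relevant range, we get $\|\widetilde\Phi(k,0)\| \le (1-c\delta^{2/\alpha})^{\alpha k/2}$ (using $k/M_k \le 1/\alpha$ and $M_k \ge \alpha k$, being slightly careful that $\delta^{2k/M_k}$ is largest when $k/M_k$ is smallest, so the per-factor bound is uniformly at most $1-c\delta^{2/\alpha}$, and there are at least $\alpha k$ such factors). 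Since $\delta \in (0,1)$ and $c>0$, the quantity $1-c\delta^{2/\alpha}$ lies in $[0,1)$, so this upper bound tends to $0$ as $k\to\infty$; hence there is $k_0(\alpha,\epsilon)$ beyond which it is strictly below $\epsilon$. For $k \ge k_0$ the event $\{\|\widetilde\Phi(k,0)\| \ge \epsilon,\ M_k \ge \alpha k\}$ is therefore empty, giving probability $0$. The only mild subtlety is handling the dependence $k/M_k$ inside the exponent cleanly; bounding each of the (at least $\alpha k$) factors individually by $1-c\delta^{2/\alpha}$ sidesteps this.

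\emph{Part~\ref{lemma-case-below-alpha-k}.} Here I would bound $\mathbb P(\|\widetilde\Phi(k,0)\|\ge\epsilon,\ M_k < \alpha k)$ simply by $\mathbb P(M_k < \alpha k)$ and estimate the latter by a union over the possible values $m = 0,1,\ldots,\lceil \alpha k\rceil - 1$ of $M_k$ and over the locations of the $m$ stopping times. If $M_k = m$, the interval $[0,k]$ is split by $T_1 < \cdots < T_m$ into $m+1$ subintervals, and on each subinterval \emph{except} the ones that realize an improvement the graph $\Gamma(\cdot,\cdot)$ stays disconnected; more usefully, between consecutive improvements $T_{i-1}$ and $T_i$, all but the last graph keep $\Gamma$ disconnected, and after the last improvement $T_m$ the window $\Gamma(k,T_m)$ is disconnected. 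So conditioned on the locations, at least $k - 2m$ (conservatively) of the graph slots are forced to lie within windows that remain disconnected, and by Lemma~\ref{lemma-disconnected} (its upper bound, eq.~\eqref{eq-upperB-for-disconnected}) the probability of a disconnected window of length $\ell$ is at most $|\Pi^\star(\mathcal G)|\, p_{\max}^{\ell}$. Multiplying the contributions from the disconnected windows (a subtlety: they are over disjoint time intervals, hence independent) gives roughly $|\Pi^\star(\mathcal G)|^{m}\, p_{\max}^{k-m}$; there are at most $\binom{k}{m}$ ways to place the improvements, and at most $\alpha k$ values of $m$. Summing, $\mathbb P(M_k < \alpha k) \le \alpha k \cdot \max_{m \le \alpha k}\binom{k}{m} |\Pi^\star(\mathcal G)|^m p_{\max}^{k-m}$.

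\emph{The asymptotics.} Taking $\tfrac1k\log(\cdot)$, the factor $\alpha k$ is negligible; $\tfrac1k\log\binom{k}{m}$ with $m\le\alpha k$ is at most the binary-entropy-type bound, but the cleaner route matching the stated RHS is $\binom{k}{m}\le \tfrac{k^m}{m!}$ and, with $m\approx\alpha k$, $\tfrac1k\log\tfrac{k^{\alpha k}}{(\alpha k)!}\to -\alpha\log\alpha$ by Stirling; the term $|\Pi^\star(\mathcal G)|^m$ contributes $\alpha\log|\Pi^\star(\mathcal G)|$ and $p_{\max}^{k-m}$ contributes $(1-\alpha)\log p_{\max}$. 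One checks the maximizing $m$ over $m\le\alpha k$ is indeed at the endpoint $m=\alpha k$ (the summand is increasing in $m$ precisely because $p_{\max}<1$ and, for the rate to be as claimed, the binomial/entropy growth dominates), which yields exactly the claimed $\limsup \le -\alpha\log\alpha + \alpha\log|\Pi^\star(\mathcal G)| + (1-\alpha)\log p_{\max}$.

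\emph{Main obstacle.} The delicate point is the combinatorial bookkeeping in Part~\ref{lemma-case-below-alpha-k}: correctly identifying which graph slots are "forced disconnected" given $M_k = m$ and the locations of the $T_i$, verifying that the corresponding events live on disjoint time windows (so that independence of the $W_t$ gives a clean product bound), and then checking that the count $k - 2m$ (or the sharper $k-m$ one actually wants) of forced slots is right so that the exponent of $p_{\max}$ comes out to $(1-\alpha)$ rather than something weaker. Getting the constant in the exponent of $p_{\max}$ exactly right — and confirming the max over $m$ sits at $m = \alpha k$ — is where the real care is needed; the rest is Stirling's formula and a union bound.
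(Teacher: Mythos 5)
Your proposal follows essentially the same route as the paper for both parts: part~1 via the bound of Lemma~\ref{lemma-Mk=m-bound} made uniform on $\{M_k\geq\alpha k\}$ (your direct two-step bound $(1-c\delta^{2k/M_k})^{M_k/2}\leq(1-c\delta^{2/\alpha})^{\alpha k/2}$ is equivalent to the paper's monotone switch-function argument), and part~2 via decomposing over $M_k=m$ and the stopping-time locations, bounding each inter-improvement window by $|\Pi^\star(\mathcal G)|\,p_{\max}^{\ell}$ using independence of disjoint windows, counting $\binom{k}{m}$ placements, and checking the maximizing $m$ sits at $\lceil\alpha k\rceil$ --- exactly as in the paper, including the correct $k-m$ count of forced-disconnected slots. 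One small point: Stirling gives $\frac{1}{k}\log\bigl(k^{\alpha k}/(\alpha k)!\bigr)\to\alpha-\alpha\log\alpha=\alpha\log(e/\alpha)$ rather than $-\alpha\log\alpha$, so your computation (like the paper's own proof, which bounds $\binom{k}{m}\leq(ke/m)^{m}$ and ends with $\alpha\log\frac{e}{\alpha}$) actually establishes the bound with an extra $+\alpha$; this is harmless for the theorem since one then takes $\inf_{\alpha\in(0,1]}$ and lets $\alpha\to 0$.
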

\begin{proof} To prove \ref{lemma-case-above-alpha-k}, we first note that, by Lemma~\ref{lemma-Mk=m-bound} we have:
\begin{equation}
\label{eq-bound-event-alpha-k}
\left\{\left\|\widetilde \Phi(k,0)\right\|\geq \epsilon\right\}\subseteq \left\{\left(1-c\delta^{2\frac{k}{M_k}}\right)^\frac{M_k}{2}\geq \epsilon \right\}.
\end{equation}
This gives for fixed $\alpha$, $\epsilon$:
\begin{align}
\label{eq-sum-of-prob}
\mathbb P \left(\left\|\widetilde \Phi(k,0)\right\|\geq \epsilon,\; M_k\geq\alpha k \right)&\leq
\mathbb P \left(\left(1-c\delta^{2\frac{k}{M_k}}\right)^\frac{M_k}{2}\geq \epsilon,\; M_k\geq\alpha k \right)\nonumber\\
&=\sum_{m=\left \lceil \alpha k \right \rceil}^k \mathbb P \left(\left(1-c\delta^{2\frac{k}{M_k}}\right)^\frac{M_k}{2}\geq \epsilon,\; M_k = m \right)\nonumber\\
&=\sum_{m=\left \lceil \alpha k \right \rceil}^k \mathbb P \left(g(k,m)\geq \frac{\log\epsilon}{k},\; M_k = m \right),
\end{align}
where $g(k,m):=\frac{m}{2k}\log\left(1-c\delta^{2\frac{k}{m}}\right)$, for $m>0$, and $\left \lceil x \right \rceil $ denotes the smallest integer not less than $x$.
For fixed $k$, each of the probabilities in the sum above is equal to $0$ for those $m$ such that $g(k,m)<\frac{\log\epsilon}{k}$. This yields:
\begin{equation}
\label{eq-bound-by-the-sum-of-switches}
\sum_{m=\left \lceil \alpha k \right \rceil}^k \mathbb P \left(g(m)\geq \frac{\log\epsilon}{k},\; M_k = m \right)
\leq \sum_{m= \left \lceil \alpha k \right \rceil }^k s(k,m),
\end{equation}
where $s(k,m)$ is the switch function defined by:
\begin{equation*}
s(k,m):=\left\{\begin{array}{ll} 0,&\;\mathrm{if}\; g(k,m)<\frac{\log\epsilon}{k}\\
1,&\;\mathrm{otherwise}  \end{array}\right.
\end{equation*}
Also, as $g(k,\cdot)$ is, for fixed $k$, decreasing in $m$, it follows that $s(k,m)\leq s(k,\alpha k )$ for $m\geq \alpha k$. Combining this with eqs.~\eqref{eq-sum-of-prob} and~\eqref{eq-bound-by-the-sum-of-switches}, we get:
\begin{equation*}
\label{eq-bound-by-the-largest-switch}
\mathbb P \left(\left\|\widetilde \Phi(k,0)\right\|\geq \epsilon,\; M_k\geq\alpha k \right)
\leq (k-\left\lceil \alpha k\right\rceil +1)s(k,\alpha k).
\end{equation*}
We now show that $s(k,\alpha k)$ will eventually become $0$, as $k$ increases, which would yield part~\ref{lemma-case-above-alpha-k} of Lemma~\ref{lemma-alpha-bounds}. To show this, we observe that $g$ has a constant negative value at $(k,\alpha k)$:
\begin{equation*}
g(k,\alpha k)=\frac{\alpha}{2} \log \left(1-c\delta^{\frac{2}{\alpha}}\right).
\end{equation*}
Since $\frac{1}{k}\log\epsilon\rightarrow 0$, as $k\rightarrow \infty$, there exists $k_0=k_0(\alpha,\epsilon)$ such that $g(k,\alpha k)<\frac{1}{k}\log\epsilon$,  for every $k\geq k_0$. Thus, $s( k,\alpha k)=0$ for every $k\geq k_0$. This completes the proof of part~\ref{lemma-case-above-alpha-k}.

To prove part~\ref{lemma-case-below-alpha-k}, we observe that
\begin{equation}
\label{eq-first-observation}
\mathbb P \left(\left\|\widetilde \Phi(k,0)\right\|\geq \epsilon,\; M_k < \alpha k\right)\leq \mathbb P \left( M_k < \alpha k\right)
=\sum_{m= 0}^{\left \lceil \alpha k \right \rceil -1}\mathbb P \left( M_k =m\right).
\end{equation}
Recalling the definition of $M_k$, we have $\left\{M_k=m \right\}=\left\{ T_m\leq k,\; T_{m+1}> k\right\}$, for $m\geq 0$; this, by further considering all possible realizations of $T_i$, $i\leq m$, yields
\begin{align}
\label{eq-Mk-eqs-m}
\mathbb P \left( M_k =m\right)=\sum_{1\leq t_1\leq\ldots\leq t_m\leq k}
\mathbb P \left(T_i=t_i, \mathrm{for}\; 1\leq i\leq m,\:T_{m+1}>k\right),
\end{align}
where the summation is over all possible realizations $T_i=t_i$, $i=1,\ldots,m$. Next, we remark that, by definition of stopping times $T_i$, supergraph $\Gamma(T_i-1,T_{i-1})$ is disconnected with probability $1$, for $i\leq m$ ($T_i$ is defined as the first time $t$ after time $T_{i-1}$ when the supergraph $\Gamma(t,T_{i-1})$ becomes connected); similarly, if $T_{m+1}>k$, then $\Gamma(k,T_m)$ is disconnected. Fixing the realizations $T_i=t_i$, $i\leq m$, this implies
\begin{align}
\mathbb P \left(T_i=t_i, \mathrm{for}\, i\leq m,\:T_{m+1}>k\right)
&\leq  \mathbb P\left(\Gamma(t_i-1,t_{i-1})\,\mathrm{is\,disconnected,\,for}\,i\leq m+1\right) \nonumber \\
& = \prod_{i=1}^{m+1} \mathbb P(\Gamma(t_i-1,t_{i-1})\,\mathrm{is\,disconnected})
\label{eq-bound-fixed-times}
\end{align}
where $t_{m+1}:=k+1$ and the equality follows by the independence of the graph realizations. Recalling Observation~\ref{obs-there-exists-maximal-for-any-disc} and the definition of $p_{\max}$, we further have, for $i\leq m+1$,
\begin{align*}
\mathbb P(\Gamma(t_i-1,t_{i-1})\,\mathrm{is\,disconnected})=
\mathbb P(\bigcup_{\mathcal H\in\Pi^\star(\mathcal G)} \left\{G_t \in \mathcal H, t_{i-1}<t<t_i\right\})
\leq |\Pi^\star(\mathcal G)| p_{\max}^{t_i-t_{i-1}-1}
\end{align*}
which, combined with~\eqref{eq-bound-fixed-times}, yields:
\begin{align}
\label{eq-bound-fixed-times-2}
\mathbb P \left(T_i=t_i, \mathrm{for}\, i\leq m,\:T_{m+1}>k\right)\leq |\Pi^\star(\mathcal G)|^{m+1} p_{\max}^{k-m}.
\end{align}
The bound in~\eqref{eq-bound-fixed-times-2} holds for any realization $T_i=t_i$, $1\leq t_1\leq \ldots \leq t_m\leq k$, of the first $m$ stopping times. Since the number of these realizations is ${k \choose m}\leq \left(\frac{ke}{m}\right)^m$ (see eq.~\eqref{eq-Mk-eqs-m}), we obtain the following bound for the probability of the event $M_k=m$, where $m\leq \left \lceil \alpha k \right \rceil -1$:
\begin{equation}
\label{eq-bound-on-Mk-eqs-m}
\mathbb P\left(M_k=m\right)\leq \left(\frac{ke}{m}\right)^m|\Pi^\star(\mathcal G)|^{m+1} p_{\max}^{k-m}.
\end{equation}
Finally, as function $h(m):=\left(\frac{ke}{m}\right)^m|\Pi^\star(\mathcal G)|^{m+1} p_{\max}^{k-m}$, that upper bounds the probability of the event $M_k=m$, is increasing for $m\leq k$, combining~\eqref{eq-first-observation} and~\eqref{eq-bound-on-Mk-eqs-m}, we get
\begin{equation}
\mathbb P \left(\left\|\widetilde \Phi(k,0)\right\|\geq \epsilon,\; M_k < \alpha k\right)\leq
\sum_{m= 0}^{\left \lceil \alpha k \right \rceil -1} h(m)\leq  \left \lceil \alpha k \right \rceil
\left(\frac{ke}{\left \lceil \alpha k \right \rceil -1}\right)^{\left \lceil \alpha k \right \rceil -1}
|\Pi^\star(\mathcal G)|^{\left \lceil \alpha k \right \rceil} p_{\max}^{k-(\left \lceil \alpha k \right \rceil -1)}.
\end{equation}
Taking the $\log$ and dividing by $k$, and taking the $\limsup_{k\rightarrow \infty}$ yields part~\ref{lemma-case-below-alpha-k} of Lemma~\ref{lemma-alpha-bounds}:
\begin{equation}
\limsup_{k\rightarrow \infty} \frac{1}{k}\log \mathbb P \left(\left\|\widetilde \Phi(k,0)\right\|\geq \epsilon,\; M_k < \alpha k \right)\leq
\alpha \log \frac {e}{\alpha} +\alpha \log |\Pi^\star(\mathcal G)| +(1-\alpha) \log p_{\max}.
\end{equation}
\end{proof}
To complete the proof of the upper bound~\eqref{eqn-upper-bound},
it remains to observe the following:
\begin{eqnarray*}
\mathbb P \left(\left\|\widetilde \Phi(k,0)\right\|\geq \epsilon\right)&=& \mathbb P \left(\left\|\widetilde \Phi(k,0)\right\|\geq \epsilon,\; M_k < \alpha k \right)+ \mathbb P \left(\left\|\widetilde \Phi(k,0)\right\|\geq \epsilon,\; M_k \geq \alpha k \right)\\
&=& \mathbb P \left(\left\|\widetilde \Phi(k,0)\right\|\geq \epsilon,\; M_k < \alpha k \right),\,\mathrm{for}\,k\geq k_0(\alpha,\epsilon),
\end{eqnarray*}
where the last equality follows by part~\ref{lemma-case-above-alpha-k} of Lemma~\ref{lemma-alpha-bounds}. Thus,
\begin{align}
\label{eq-final-step}
\limsup_{k\rightarrow \infty} \frac{1}{k}\log \mathbb P \left(\left\|\widetilde \Phi(k,0)\right\|\geq \epsilon \right) &= \limsup_{k\rightarrow \infty} \frac{1}{k}\log \mathbb P \left(\left\|\widetilde \Phi(k,0)\right\|\geq \epsilon,\; M_k < \alpha k \right)\nonumber \\
& \leq -\alpha \log \alpha +\alpha \log |\Pi^\star(\mathcal G)| +(1-\alpha) \log p_{\max}.
\end{align}
Since, by part~\ref{lemma-case-below-alpha-k} of Lemma~\ref{lemma-alpha-bounds}, inequality~\eqref{eq-final-step} holds for every $\alpha \in (0,1]$, taking the $\inf_{\alpha \in (0,1]}$ yields the upper bound~\eqref{eqn-upper-bound}. This completes
the proof of Theorem \ref{theorem-main} for the case when $\Pi^\star(\mathcal G)$ is nonempty. We now consider the case when $\Pi^\star(\mathcal G)=\emptyset$. In this case each realization of $G_t$ is connected (otherwise, $\Pi(\mathcal G)$ would contain at least this disconnected realization). Applying Corollary~\ref{corollary-just-connected} to successive graph realizations (i.e., for $s=t+1$) we get that
 \[\left\|\widetilde \Phi(k,0)\right\|\leq \left(1-c\delta^2\right)^{\frac{k}{2}}.\]
 For any $\epsilon >0$, there will exists $k_1=k_1(\epsilon)$ such that the right hand side is smaller than $\epsilon$ for all $k\geq k_1$. This implies that, for any $k\geq k_1$, the norm $\left\|\widetilde \Phi(k,0)\right\|$ is smaller than $\epsilon$ with probability $1$, thus yielding the rate $I=\infty$ in Theorem~\ref{theorem-main} for the case when $\Pi^\star(\mathcal G)=\emptyset$. This completes the proof of Theorem~\ref{theorem-main}.

\section{Computation of the exponential rate of consensus via min-cut: Gossip and link failure models}
\label{sec-Examples}
Motivated by the applications of averaging in sensor networks and distributed dynamical systems, we consider two frequently used types of random averaging models: gossip and link failure models. For a generic graph $G=(V,E)$, we show that $p_{\max}$ for both models can be found by solving an instance of a  min-cut problem over the same graph $G$. The corresponding link costs are simple functions of the link occurrence probabilities. In this section, we detail the relation between the min-cut problem and computation of $p_{\max}$.

We now state Lemma~\ref{lemma-max-over-disc-collections} on the computation of $p_{\max}$ that holds for the general random graph process that later will help us to calculate $p_{\max}$ for the gossip and link failure models. Lemma~\ref{lemma-max-over-disc-collections} assures that $p_{\max}$ can be found by relaxing the search space from $\Pi^\star(\mathcal G)$ -- the set of maximally disconnected collections, to $\Pi(\mathcal G)$ -- the set of all disconnected collections.
\vspace{- 1.5mm}
\begin{lemma}
\label{lemma-max-over-disc-collections}
\vspace{-1mm}
\begin{equation}
\label{eq-max-over-disc-collections}
p_{\max}=\max_{\mathcal H \in \Pi(\mathcal G)} p_{\mathcal H}
\end{equation}
\end{lemma}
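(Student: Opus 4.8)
The plan is to prove the identity by establishing the two inequalities $p_{\max}\le \max_{\mathcal H\in\Pi(\mathcal G)}p_{\mathcal H}$ and $p_{\max}\ge \max_{\mathcal H\in\Pi(\mathcal G)}p_{\mathcal H}$ separately. The first inequality is immediate: every maximal disconnected collection is, in particular, a disconnected collection, so $\Pi^\star(\mathcal G)\subseteq\Pi(\mathcal G)$, and the maximum of $p_{\mathcal H}$ taken over the smaller index set cannot exceed the maximum taken over the larger one. (Both maxima are attained because $\mathcal G$, and hence $\Pi(\mathcal G)$ and $\Pi^\star(\mathcal G)$, are finite.)

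The substance is the reverse inequality, and the key step is a greedy ``saturation'' argument showing that every disconnected collection $\mathcal H\in\Pi(\mathcal G)$ is contained in some maximal one $\mathcal H^\star\in\Pi^\star(\mathcal G)$. Set $\mathcal H_0:=\mathcal H$; as long as the current collection $\mathcal H_j$ fails to be maximal, by the definition of maximality there is some $G\in\mathcal G\setminus\mathcal H_j$ for which $\Gamma(\mathcal H_j\cup\{G\})$ is still disconnected, and we put $\mathcal H_{j+1}:=\mathcal H_j\cup\{G\}$. Each step strictly enlarges the collection while preserving disconnectedness of its supergraph, so by finiteness of $\mathcal G$ the process terminates at a collection $\mathcal H^\star\supseteq\mathcal H$ that is disconnected and cannot be enlarged, i.e.\ $\mathcal H^\star\in\Pi^\star(\mathcal G)$. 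This is the same extension mechanism underlying Observation~\ref{obs-there-exists-maximal-for-any-disc}; what makes it work is the obvious monotonicity that adding edges can only destroy, never create, a disconnection, so once $\Gamma(\mathcal H_j)$ is disconnected every sub-collection's supergraph is disconnected as well, and enlargement is always ``safe'' to attempt.

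To finish, I note that $p_{\mathcal H}=\sum_{H\in\mathcal H}\mathbb P(G_t=H)$ is a sum of nonnegative terms, so $\mathcal H\subseteq\mathcal H^\star$ gives $p_{\mathcal H}\le p_{\mathcal H^\star}\le p_{\max}$. Applying this to a collection $\mathcal H$ that attains $\max_{\mathcal H\in\Pi(\mathcal G)}p_{\mathcal H}$ yields $\max_{\mathcal H\in\Pi(\mathcal G)}p_{\mathcal H}\le p_{\max}$, and combined with the first inequality this proves~\eqref{eq-max-over-disc-collections}. I do not expect a genuine obstacle here; the only points requiring care are the termination/monotonicity bookkeeping of the greedy step and the remark that $\Pi(\mathcal G)$ is finite so the relaxed maximum is attained. (One degenerate situation to keep in mind is when $\Gamma(\mathcal G)$ is itself disconnected, in which the greedy process would run all the way up to $\mathcal G$; this case lies outside the consensus regime of interest and is excluded from the outset.)
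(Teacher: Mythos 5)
Your proof is correct and follows essentially the same route as the paper: both reduce the claim to showing that every disconnected collection is contained in a maximal one (the paper cites Observation~\ref{obs-there-exists-maximal-for-any-disc} for this, while you spell out the greedy saturation explicitly) and then use monotonicity of $p_{\mathcal H}$ under set inclusion. No gaps.
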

\vspace{-2mm}
\begin{proof}
Since $\Pi^\star(\mathcal G)\subseteq \Pi(\mathcal G)$, to show~\eqref{eq-max-over-disc-collections} it suffices to show that for any $\mathcal H \in \Pi(\mathcal G)$ there exists $\mathcal H' \in \Pi^\star(\mathcal G)$ such that $p_{\mathcal H'}\geq p_{\mathcal H}$. To this end, pick arbitrary $\mathcal H \in \Pi(\mathcal G)$ and recall Observation~\ref{obs-there-exists-maximal-for-any-disc}. Then, there exists $\mathcal H'\in \Pi^\star(\mathcal G)$ such $\mathcal H\subseteq \mathcal H'$, which implies that
\begin{equation*}
p_{\mathcal H}=\sum_{G\in \mathcal H} \mathbb P\left(G_t=G\right)\leq \sum_{G\in \mathcal H'} \mathbb P\left(G_t=G\right)=p_{\mathcal H'}.
\end{equation*}
and proves~\eqref{eq-max-over-disc-collections}.
\vspace{-1mm}
\end{proof}
Before calculating the rate $I$ for gossip and link failure models,
we explain the minimum cut (min-cut) problem.

\mypar{Minimum cut (min-cut) problem}
Given an undirected weighted graph $G=(V,E,C)$ where $V$ is the
set of $N$ nodes, $E$ is the set of edges,
and $C=[c_{ij}]$ is the $N \times N$ matrix
of the edge nonnegative costs; by convention, we set $c_{ii}=0,$ for all $i$,
 and $c_{ij}=0$, for $\{i,j\} \notin E.$
  The min-cut problem is to find the sub-set
  of edges $E^\prime$ such that $G^\prime=(V,E \setminus E^\prime)$
   is disconnected and the sum $\sum_{\{i,j\}\in E^\prime}c_{ij}$
    is the minimal possible; we denote this minimal value, also
    referred to as the connectivity, by $\mathrm{mincut}(V,E,C)$.
    The min-cut problem
    is easy to solve, and there exist efficient
    algorithms to solve it, e.g.,~\cite{SimpleMinCut,compactingcuts}.

\vspace{-2mm}
\subsection{Gossip model}
\label{subsec-Gossip}
Consider the network of $N$ nodes, collected in the set $V$ and with the set $E\subseteq {V \choose 2}$ defining communication links between the nodes, such that if $\{i,j\}\in E$ then nodes $i$,$j\in V$ can communicate.
In the gossip algorithm, only one link $\{i,j\}\in E$ is active at a time. Let $p_{ij}$ be the probability of occurrence of link $\{i,j\}\in E$:
\vspace{-1.5mm}
\begin{equation}
p_{ij}=\mathbb P\left(G_t=(V,\{i,j\})\right).
\end{equation}
\vspace{-1.5mm}
We note that $\sum_{\{i,j\}\in E}p_{ij}=1$.
\begin{lemma}
\label{lemma-gossip-mincut}
Consider a gossip model on a graph $G=(V,E)$ with link probabilities $p_{ij}$, $\{i,j\}\in E$. Construct a mincut problem instance with the graph $G$ and the cost assigned to link $\{i,j\}$ equal $p_{ij}$. Then:
\begin{align}
p_{\max}^{\mbox {\scriptsize Gossip}}(V,E,P)&=1-\mathrm{mincut}(V,E,P)\\
I^{\mbox {\scriptsize Gossip}}(V,E,P)&=-\log(1-\mathrm{mincut}(V,E,P)),
\end{align}
where $P$ is the symmetric matrix that collects link occurrence probabilities, $P_{ij}=p_{ij}$, $\{i,j\}\in E$, $P_{ii}=0$, for $i=1,\ldots,N$ and $P_{ij}=0$, $\{i,j\}\notin E$.
\end{lemma}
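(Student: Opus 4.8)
The plan is to establish the two equalities by identifying the most likely disconnected collection of gossip graphs with a minimum cut of the weighted graph $(V,E,P)$. By Lemma~\ref{lemma-max-over-disc-collections}, it suffices to compute $\max_{\mathcal H\in\Pi(\mathcal G)}p_{\mathcal H}$, where $\mathcal G$ is the set of all one-link graphs $(V,\{i,j\})$, $\{i,j\}\in E$. The second identity then follows immediately from the first and the definition $I=|\log p_{\max}|=-\log p_{\max}$ (note $p_{\max}<1$ since the graph is connected, so the rate is finite and positive).

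The key observation is a bijective correspondence between subsets of edges of $G$ and subsets of $\mathcal G$: a collection $\mathcal H\subseteq\mathcal G$ is determined by the set of links $F(\mathcal H)=\{\{i,j\}: (V,\{i,j\})\in\mathcal H\}\subseteq E$, and since each graph in $\mathcal H$ contributes exactly one edge, $\Gamma(\mathcal H)=(V,F(\mathcal H))$. Hence $\mathcal H$ is a disconnected collection precisely when the subgraph $(V,F(\mathcal H))$ is disconnected, i.e. when $E\setminus F(\mathcal H)$ is a cut of $G$. Moreover, by~\eqref{eq-def-pH}, $p_{\mathcal H}=\sum_{\{i,j\}\in F(\mathcal H)}p_{ij}=1-\sum_{\{i,j\}\in E\setminus F(\mathcal H)}p_{ij}$, using $\sum_{\{i,j\}\in E}p_{ij}=1$. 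Therefore maximizing $p_{\mathcal H}$ over disconnected collections is the same as minimizing $\sum_{\{i,j\}\in E'}p_{ij}$ over all edge sets $E'$ whose removal disconnects $G$ — which is exactly $\mathrm{mincut}(V,E,P)$. This yields $p_{\max}=\max_{\mathcal H\in\Pi(\mathcal G)}p_{\mathcal H}=1-\mathrm{mincut}(V,E,P)$.

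One point that needs a small argument: $\Pi(\mathcal G)$ must be nonempty for the formula to make sense, equivalently $\Pi^\star(\mathcal G)\neq\emptyset$ (so that Theorem~\ref{theorem-main} gives the finite branch $I=|\log p_{\max}|$). Since $G$ has $N\geq 2$ vertices and $E$ is finite, removing all edges incident to a fixed vertex disconnects $G$, so the corresponding collection lies in $\Pi(\mathcal G)$; equivalently the min-cut is over a nonempty family and $\mathrm{mincut}(V,E,P)>0$ because every cut must remove at least one edge and all $p_{ij}>0$ for $\{i,j\}\in E$ with positive occurrence probability. Here I assume, consistent with the gossip model, that every link in $E$ has $p_{ij}>0$ (links with zero probability can simply be deleted from $E$ without changing anything).

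The main obstacle is essentially bookkeeping rather than a deep difficulty: one must be careful that the min-cut in the constructed instance is taken over \emph{all} disconnecting edge sets (not just, say, single-node isolating cuts), and that the cost matrix $P$ is set up with $P_{ii}=0$ and $P_{ij}=0$ off $E$ so that the min-cut value genuinely equals $\min_{E' \text{ a cut}}\sum_{\{i,j\}\in E'}p_{ij}$. Given the clean edge–collection bijection, once this correspondence is spelled out the two displayed equalities drop out directly, with no estimation required.
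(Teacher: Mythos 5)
Your proposal is correct and follows essentially the same route as the paper: identify each one-link realizable graph with its edge, so that disconnected collections correspond exactly to disconnected spanning subgraphs of $G$, invoke Lemma~\ref{lemma-max-over-disc-collections} to search over all of $\Pi(\mathcal G)$, and use $\sum_{\{i,j\}\in E}p_{ij}=1$ to convert the maximization into $1-\mathrm{mincut}(V,E,P)$. Your added remarks on the nonemptiness of $\Pi(\mathcal G)$ and on $p_{\max}<1$ are harmless extras not present in the paper's proof.
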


\begin{proof}
For the gossip model, the set of all possible graph realizations $\mathcal G^{\mbox {\scriptsize Gossip}}$ is the set of all one-link subgraphs of $(V,E)$:
\begin{equation}
\mathcal G^{\mbox {\scriptsize Gossip}}=\left\{(V,\{i,j\}):\{i,j\}\in E \right\}.
\end{equation}
Also, there is a one to one correspondence between the set of collections of realizable graphs and the set of subgraphs of $G$: a collection $\mathcal H\subseteq \mathcal G$ corresponds to the subgraph $H$ of $G$ if and only if $H=\Gamma(\mathcal H)$. Thus, if we assign to each link in $G$ a cost equal to $p_{ij}$, then searching over the set $\Pi(\mathcal G)$ of all disconnected collections to find the most likely one is equivalent to searching over all disconnected subgraphs of $G$ with the maximal total cost:
\begin{align}
\label{eq-max-subgraph-weight}
p_{\max}^{\mbox{\scriptsize Gossip}}&=\max_{\mathcal H \in \Pi(\mathcal G)}p_{\mathcal H}\nonumber \\
&=\max_{E'\subseteq E,\; (V,E')\;\mathrm{is\;disc.}} \sum_{\{i,j\}\in E'}p_{ij}.
\end{align}
Using the fact that $\sum_{\{i,j\}\in E}p_{ij}=1$, eq.~\eqref{eq-max-subgraph-weight} can be written as:
\begin{align}
\max_{E'\subseteq E,\; (V,E')\;\mathrm{is\;disc.}} \sum_{\{i,j\}\in E'}p_{ij}
&=\max_{F\subseteq E,\; (V,E\setminus F)\;\mathrm{is\;disc.}} 1-\sum_{\{i,j\}\in F}p_{ij}\\
&=1-\min_{F\subseteq E,\; (V,E\setminus F)\;\mathrm{is\;disc.}} \sum_{\{i,j\}\in F}p_{ij}.
\end{align}
The minimization problem in the last equation is the min-cut problem $\mathrm{mincut} (V,E,P)$.
\end{proof}

\mypar{Gossip on a regular network} We now consider a special case of the uniform gossip model
on a connected regular graph with degree $d$, $d=2,...,N-1,$ and the uniform link
occurrence probability $p:=p_{ij}=\frac{2}{N d}.$ It can be easily seen that
 the value of the min-cut is $p$ times the minimal number of edges that disconnects the graph,
 which equals $p d = 2/N$; this corresponds to cutting all the edges of a fixed node, i.e.,
 isolating a fixed node. Hence,
 \begin{align*}
 p_{\mathrm{max}} &= \mathbb{P} \left( \mathrm{node\,}i\mathrm{\,is\,isolated} \right) = 1 - 2/N\\
 I                &= -\log(1-2/N).
 \end{align*}
Note that the asymptotic rate $I$ is determined by the probability
 that a fixed node is isolated; and the rate $I$ does not depend on the degree $d.$

\vspace{-2mm}
\subsection{Link failure model}
\label{subsec-Link-failure}
Similarly as with the gossip model, we introduce a graph $G=(V,E)$ to model the communication links between the nodes. In contrast with the gossip model, the link failure model assumes that each feasible link $\{i,j\}\in E$ occurs independently from all the others links in the network. Let again $p_{ij}$ denote the probability of occurrence of link $\{i,j\}\in E$. (Remark that, due to the independence assumption, we now do not have any condition on the link occurrence probabilities $p_{ij}$.)
\begin{lemma}
\label{lemma-link-failure-mincut}
Consider a link failure model on a graph $G=(V,E)$ with link probabilities $p_{ij}$, $\{i,j\}\in E$. Construct a mincut problem instance with the graph $G$ and the cost of link $\{i,j\}$ equal to $-\log(1-p_{ij})$. Then:
\begin{align}
\label{eqn-link-failure-min-cut}
p_{\max}^{\mbox {\scriptsize Link\:fail.}}(V,E,P)&=e^{-\mathrm{mincut}(V,E,-\log(1-P))}\\
I^{\mbox {\scriptsize Link\:fail.}}(V,E,P)&=\mathrm{mincut}(V,E,-\log(1-P)),
\end{align}
where $P$ is the symmetric matrix that collects the link occurrence probabilities, $P_{ij}=p_{ij}$, $\{i,j\}\in E$, $P_{ii}=0$, for $i=1,\ldots,N$ and $P_{ij}=0$, $\{i,j\}\notin E$ and $\log X$ denotes the entry wise logarithm of a matrix $X$.
\end{lemma}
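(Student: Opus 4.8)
The plan is to mimic the structure of the proof of Lemma~\ref{lemma-gossip-mincut}, reducing the optimization over disconnected collections to a min-cut problem, but accounting for the key structural difference of the link failure model: the realizable graphs are \emph{not} just one-link graphs but \emph{all} subgraphs $H=(V,F)$ with $F\subseteq E$. First I would identify $\mathcal G^{\mbox{\scriptsize Link\,fail.}}=\{(V,F):F\subseteq E\}$ and observe the crucial monotonicity/closure property: a collection $\mathcal H\subseteq\mathcal G$ has $\Gamma(\mathcal H)=(V,\bigcup_{(V,F)\in\mathcal H}F)$, and a collection is disconnected iff this union edge set $E'=\bigcup F$ fails to span a connected graph on $V$. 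Moreover — and this is the analogue of the one-to-one correspondence used in the gossip proof — since \emph{every} subset of $E$ is a realizable graph, the largest disconnected collection whose supergraph equals a fixed disconnected $(V,E')$ is precisely the collection of \emph{all} subgraphs $(V,F)$ with $F\subseteq E'$. Hence by Lemma~\ref{lemma-max-over-disc-collections}, $p_{\max}$ is obtained by maximizing, over all $E'\subseteq E$ with $(V,E')$ disconnected, the probability $p_{\mathcal H}=\mathbb P(G_t\subseteq (V,E'))$.

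Next I would compute that probability explicitly using the independence of link occurrences. The event $\{G_t\subseteq(V,E')\}$ is exactly the event that none of the links in $E\setminus E'$ is active, so, writing $F=E\setminus E'$,
\begin{equation*}
\mathbb P\left(G_t\subseteq (V,E')\right)=\prod_{\{i,j\}\in F}(1-p_{ij}).
\end{equation*}
Therefore
\begin{equation*}
p_{\max}^{\mbox{\scriptsize Link\,fail.}}=\max_{F\subseteq E:\,(V,E\setminus F)\;\mathrm{is\;disc.}}\;\prod_{\{i,j\}\in F}(1-p_{ij}).
\end{equation*}
Taking $-\log$ turns the product into a sum and the max into a min:
\begin{equation*}
-\log p_{\max}^{\mbox{\scriptsize Link\,fail.}}=\min_{F\subseteq E:\,(V,E\setminus F)\;\mathrm{is\;disc.}}\;\sum_{\{i,j\}\in F}\bigl(-\log(1-p_{ij})\bigr),
\end{equation*}
which is exactly $\mathrm{mincut}(V,E,-\log(1-P))$ with the stated edge costs $c_{ij}=-\log(1-p_{ij})\geq 0$. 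Exponentiating gives the first displayed identity, and $I^{\mbox{\scriptsize Link\,fail.}}=|\log p_{\max}|=\mathrm{mincut}(V,E,-\log(1-P))$ follows from Theorem~\ref{theorem-main}, after noting $\Pi^\star(\mathcal G)\neq\emptyset$ here (any single-link removal already disconnects nothing is needed; any cut $F$ exists since $G$ is finite, and the all-empty graph is realizable so disconnected collections exist whenever $N\geq 2$).

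The step I expect to require the most care is the reduction claim that the optimal disconnected collection is generated by a single ``supergraph'' edge set $E'$ — i.e. that we may restrict attention to collections of the form $\{(V,F):F\subseteq E'\}$ rather than arbitrary $\mathcal H\in\Pi(\mathcal G)$. This is where the ``all subgraphs are realizable'' hypothesis is essential and must be invoked cleanly: given any $\mathcal H\in\Pi(\mathcal G)$, its supergraph $E'=\bigcup_{(V,F)\in\mathcal H}F$ is disconnected by definition, and the enlarged collection $\mathcal H'=\{(V,F):F\subseteq E'\}\supseteq\mathcal H$ is still a disconnected collection (same supergraph $(V,E')$), so $p_{\mathcal H'}\geq p_{\mathcal H}$; this shows the maximum over $\Pi(\mathcal G)$ is attained at such a ``downward-closed'' collection, and for those $p_{\mathcal H'}=\mathbb P(G_t\subseteq(V,E'))$ as computed above. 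Everything else is the same change-of-variable $F\leftrightarrow E\setminus F$ and $\log$-manipulation already used for the gossip case, plus an appeal to Theorem~\ref{theorem-main}.
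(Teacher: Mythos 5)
Your proposal is correct and follows essentially the same route as the paper's proof: identify $\mathcal G^{\mbox{\scriptsize Link\,fail.}}$ as all subgraphs of $G$, reduce via Lemma~\ref{lemma-max-over-disc-collections} to maximizing $p_{\mathcal H}$ over disconnected collections, observe that the maximum is attained on the downward-closed collection $\{(V,F):F\subseteq E'\}$ for a disconnected supergraph $(V,E')$, compute its probability as $\prod_{\{i,j\}\in E\setminus E'}(1-p_{ij})$ by independence, and convert the max of products into a min-cut by taking logarithms. The only cosmetic difference is that you enlarge an arbitrary disconnected collection to its downward closure, whereas the paper first splits the maximization over the disconnecting edge set $F$ and then over collections with that fixed supergraph; the key observation is identical.
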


\begin{proof}
 Since the links occur independently, any subgraph $H=(V,E')$ of $G$ can occur at a given time,
 therefore yielding that the collection of realizable graphs $\mathcal G^{\mbox {\scriptsize Link\:fail.}}$ is the collection of all subgraphs of $G$:
\begin{equation}
\mathcal G^{\mbox {\scriptsize Link\:fail.}}=\left\{(V,E'): E'\in 2^E    \right\};
\end{equation}
here $2^E$ denotes the power set of $E$, i.e., the collection of all possible subsets of the set of feasible \nolinebreak links~$E$.

This implies that for any fixed set $F\subseteq E$ of edges that disconnect $G=(V,E)$ we can find a disconnected collection $\mathcal H\subseteq \mathcal G$ such that $\Gamma (\mathcal H)=(V,E\setminus F)$ (recall that $\Gamma(\mathcal H)$ is the minimal supergraph of all the graphs contained in $\mathcal H$). On the other hand, any disconnected collection will map by $\Gamma$ to one disconnected subgraph of $G$. Therefore, in order to find $p_{\max}^{\mbox {\scriptsize Link\:fail.}}$ we can split the search over disconnected collections $\mathcal H$ as follows:
\begin{align}
\label{eq-split-the-search}
p_{\max}^{\mbox {\scriptsize Link\:fail.}}&=\max_{\mathcal H \subseteq \mathcal G\: \Gamma(\mathcal H)\mathrm{is\:disc.}}  p_{\mathcal H}\nonumber\\
&=\max_{F\subseteq E,\; F\:\mathrm{disconnects}\:(V,E)}\max_{\mathcal H \subseteq \mathcal G\: \Gamma(\mathcal H)=(V,E\setminus F)} p_{\mathcal H}.
\end{align}
Next, we fix a disconnecting set of edges $F\subseteq E$ and consider all $\mathcal H \subseteq \mathcal G$ such that $\Gamma(\mathcal H)=(V,E\setminus F)$. We claim that, among all such collections, the one with maximal probability is $\mathcal H_F:=\left\{(V,E'): E'\subseteq E\setminus F\right\}$. To show this, we observe that if $H=(V,E')\in \mathcal H$, then $E^\prime \cap F=\emptyset$, thus implying:
\begin{align*}
p_{\mathcal H}=\sum_{H\in \mathcal H} P\left( G_t= H \right)\leq \sum_{H=(V,E'):E^\prime\subseteq E^\prime\cap F=\emptyset} P\left( G_t= H \right)=p_{\mathcal H_F}.
\end{align*}
Therefore, the expression in~\eqref{eq-split-the-search} simplifies to:
\begin{equation*}
\max_{F\subseteq E,\; F\:\mathrm{disconnects}\:G=(V,E)} p_{\mathcal H_F}.
\end{equation*}
We next compute $p_{\mathcal H_F}$ for given $F\subseteq E$:
\begin{align*}
p_{\mathcal H_F}
&= \mathbb P\left(E(G_t)\cap F=\emptyset\right)\\
&=\mathbb P\left(\{i,j\}\notin E(G_t),\:\mathrm{for\:all\:}\{i,j\}\in F\right)\\
&=\prod_{\{i,j\}\in F}(1-p_{ij}),
\end{align*}
where the last equality follows by the independence assumption on the link occurrence probabilities.
This implies that $p_{\max}^{\mbox {\scriptsize Link\:fail.}}$ can be computed by
\begin{align*}
p_{\max}^{\mbox {\scriptsize Link\:fail.}}&=\max_{F\subseteq E,\; F\:\mathrm{disconnects}\:G=(V,E)}\prod_{\{i,j\}\in F}(1-p_{ij})\\
&=e^{-\min_{F\subseteq E,\; F\:\mathrm{disconnects}\:(V,E)}\sum_{\{i,j\}\in F}-\log(1-p_{ij})}\\
&=e^{-\min_{F\subseteq E,\; F\:\mathrm{disconnects}\:(V,E)}-\mathrm{mincut}(V,E,-\log(1-P))}.
\end{align*}
\vspace{-2mm}
\end{proof}
\mypar{Regular graph and uniform link failures} We now consider
the special case when the underlying graph is a connected regular graph
with degree $d$, $d=2,...,N-1$, and the uniform link occurrence
probabilities $p_{ij}=p.$ It is easy to see that $p_{\mathrm{max}}$ and $I$ simplify to:
\begin{align*}
p_{\max} &=  \mathbb{P} \left( \mathrm{node\,}i\mathrm{\,is\,isolated} \right) = (1-p)^d\\
I        &=  -d \log(1-p).
\end{align*}

\section{Application: Optimal power allocation for distributed detection}
\label{section-application}
We now demonstrate the usefulness of our Theorem~\ref{theorem-main} by applying it to
consensus+innovations distributed detection in~\cite{Non-Gaussian-DD,GaussianDD}
 over networks with symmetric fading links. We summarize
 the results in the current section. We first show that the asymptotic
performance (exponential decay rate of the error probability) of
distributed detection
 explicitly depends
on the rate of consensus $|\log p_{\mathrm{max}}|$. Further, we note
that $|\log p_{\mathrm{max}}|$ is a function of the link fading
(failure) probabilities, and, consequently, of the sensors'
transmission power. We exploit this fact to formulate the optimization
problem of minimizing the transmission power subject to a lower bound
on the guaranteed detection performance; the latter
translates into the requirement that $|\log p_{\mathrm{max}}|$ exceeds a
threshold. We show that the corresponding optimization problem is
convex. Finally, we illustrate by simulation the significant gains of
the optimal transmission power allocation over the uniform
transmission power allocation.

\subsection{Consensus+innovations distributed detection}
\mypar{Detection problem} We now briefly explain the distributed
detection problem that we consider. We
consider a network of $N$ sensors that cooperate to detect an event
of interest, i.e., face a binary hypothesis test $H_1$ versus $H_0$.
Each sensor $i$, at each time step $t$, $t=1,2,...,$
performs a measurement $Y_i(t)$. We assume that the measurements are $i.i.d.$, both in time and across sensors, where under hypothesis $H_l$, $Y_i(t)$ has the density function $f_l$, $l=0,1$, for $i=1,\ldots,N$ and $t=1,2,\ldots$

\mypar{Consensus+innovations distributed detector}
To resolve between the two hypothesis, each sensor $i$ maintains over
time $k$ its local decision variable $x_{i,k}$
 and compares it with a threshold; if $x_{i,k}>0$, sensor $i$ accepts $H_1$; otherwise, it accepts $H_0$.
  Sensor $i$ updates its decision variable $x_{i,k}$ by exchanging the
decision variable
    locally with its neighbors, by computing the weighted average
    of its own and the neighbors' variables, and by incorporating its
new measurement
     through a log-likelihood ratio $L_{i,k}=\log \frac{f_1(Y_{i,k})}{f_0(Y_{i,k})}$:
\begin{eqnarray}
\label{eqn-running-cons-sensor-i}
x_{i,k} = \sum_{j \in O_{i,k}}  W_{ij,k} \left(
\frac{k-1}{k}x_{j,k-1}+\frac{1}{k}L_{j,k} \right), \:k=1,2,...,
x_{i,0} = 0.
\end{eqnarray}
Here $O_{i,k}$ is the (random) neighborhood of sensor $i$ at time $k$
(including $i$), and
$W_{ij,k}$ is the (random) averaging weight that sensor $i$ assigns to sensor $j$ at time $k$.

Let
$x_k =
(x_{1,k},x_{2,k},...,x_{N,k})^\top$ and
$L_k=(L_{1,k},...,L_{N,k})^\top$.
Also, collect the averaging weights $W_{ij,k}$ in the $N \times N$
 matrix $W_k$, where, clearly, $W_{ij,k}=0$ if the sensors
  $i$ and $j$ do not communicate at time step $k$. Then, using the
definition of $\Phi(k,t)$ at the beginning of Section \ref{Sec-Main}, writing
\eqref{eqn-running-cons-sensor-i}
 in matrix form, and unwinding the recursion, we get:
\begin{equation}
\label{alg-unwinded}
x_k = \frac{1}{k} \sum_{t=1}^{k} \Phi(k,t-1) L_t,\,\,k=1,2,...
\end{equation}
Equation \eqref{alg-unwinded} shows the significance of the matrices
$\Phi(k,t)$ to the distributed detection performance, and, in particular,
on the significance of how much the matrices
$\Phi(k,t)$ are close to $J$. Indeed,
when $\Phi(k,t) = J$,
the contribution of $L_t$ to $x_{i,k}$ is $[\Phi(k,t) L_t]_i=\frac{1}{N}\sum_{i=1}^N L_{i,t}$,
and hence sensor $i$ effectively uses the local likelihood ratios of
all
the sensors. In the other extreme,
 when $\Phi(k,t) = I$, $[\Phi(k,t) L_t]_i = L_{i,t}$, and hence sensor $i$
 effectively uses only its own likelihood ratio. In fact,
 it can be shown that, when $I$ exceeds a certain threshold,
 then the asymptotic performance (the exponential decay rate
  of the error probability) at each sensor $i$ is optimal, i.e.,
  equal to the exponential decay rate
  of the best centralized detector. Specifically,
  the optimality threshold
  depends on the sensor observations
  distributions $f_1$ and $f_0$ and is given by\footnote{See \cite{Non-Gaussian-DD}
  for the precise expression of the threshold.} (see also Figure~\ref{Fig-error-exponent}):
  \begin{equation}
  \label{eqn-rate-threshold}
  I \geq I^\star\left( f_1,f_0,N\right).
  \end{equation}

  \mypar{Remark} Reference~\cite{Non-Gaussian-DD} derives a
 sufficient condition for the asymptotic optimality in terms of
 $\lambda_2(\mathbb{E}[W^2_k])$ in the form:
 $|\log \lambda_2(\mathbb{E}[W^2_k])| \geq I^\star\left( f_1,f_0,N\right)$, based
 on the inequality $\limsup_{k\rightarrow \infty}\frac{1}{k} \log \mathbb{P}(\widetilde{\Phi}(k,0)>\epsilon)
 \leq \log \lambda_2(\mathbb{E}[W^2_k])$; this inequality holds for arbitrary i.i.d. averaging models and it does not require the assumption that the positive entries of $W_k$ are bounded away from zero. The sufficient condition $|\log \lambda_2(\mathbb{E}[W^2_k])| \geq I^\star\left( f_1,f_0,N\right)$ is hence readily improved by replacing the upper bound $\log \lambda_2(\mathbb{E}[W^2_k])$ with the exact limit $-I$, whenever the matrix process satisfies Assumption~\ref{assumption}.

\subsection{Optimal transmission power allocation}
Equation \eqref{eqn-rate-threshold} says that
  there is a sufficient rate of consensus $I^\star$ such that the distributed detector
  is asymptotically optimal; a further increase of $I$ above $I^\star$ does not improve the
  exponential decay rate of the error probability. Also,
  as we have shown in subsection~\ref{subsec-Link-failure}, the rate of consensus $I$ is a function of the
  link occurrence probabilities, which are further dependent on the sensors' transmission power.
   In summary, \eqref{eqn-rate-threshold} suggests that
   there is a sufficient (minimal required) transmission power
   that achieves detection with the optimal exponential decay rate. This
   discussion motivates us to formulate the optimal power
   allocation problem of minimizing the
   total transmission power per time $k$
    subject to the optimality condition $I \geq I^\star.$ Before presenting
    the optimization problem, we detail the inter-sensor communication model.
\begin{figure}[thpb]
  \centering
  \includegraphics[scale=1, height=6cm, width=9.5cm]{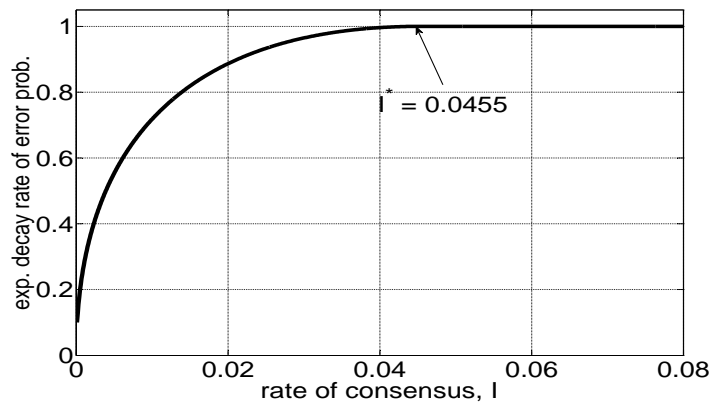}
  \caption{Lower bound on the exponential decay rate
  of the maximal error probability across sensors versus
  the rate of consensus $I$ for
  Gaussian sensor observations $f_1 \sim \mathcal{N}(m,\sigma^2)$
   and $f_0 \sim \mathcal{N}(0,\sigma^2)$.}
  \label{Fig-error-exponent}
\end{figure}

\mypar{Inter-sensor communication model}
We adopt a symmetric Rayleigh fading channel model, a model similar to the one proposed in~\cite{Scaglione-Rayleigh} (reference~\cite{Scaglione-Rayleigh} assumes asymmetric channels).
At time $k$, sensor $j$ receives from sensor $i$:
\[
y_{ij,k}=g_{ij,k}\sqrt{\frac{S_{ij}}{d_{ij}^\alpha}} x_{i,k} + n_{ij,k},
\]
where $S_{ij}$ is the transmission power that sensor $i$ uses
for transmission to sensor $j$, $g_{ij,k}$ is the channel fading
coefficient, $n_{ij,k}$
 is the zero mean additive Gaussian noise with variance $\sigma_n^2$,
$d_{ij}$ is the inter-sensor
 distance, and $\alpha$ is the path loss coefficient.
 We assume that the channels $(i,j)$ and $(j,i)$
  at time $k$ experience the same fade, i.e., $g_{ij,k}=g_{ji,k}$;
  $g_{ij,k}$ is i.i.d. in time; and $g_{ij,t}$
   and $g_{lm,s}$ are mutually independent for all $t,s.$ We adopt
 the following link failure model. Sensor
  $j$ successfully decodes the message from sensor
  $i$ (i.e., the link $(i,j)$ is online) if the signal to noise ratio
exceeds a threshold, i.e.,
  if: $\mathrm{SNR} = \frac{S_{ij}g_{ij,k}^2}{\sigma_n^2 d_{ij}^\alpha}>\tau$,
  or, equivalently, if $g_{ij,k}^2 > \frac{\tau \sigma_n^2
d_{ij}^\alpha}{S_{ij}} :=
  \frac{K_{ij}}{S_{ij}}.$ The quantity $g_{ij,k}^2$ is,
  for the Rayleigh fading channel, exponentially distributed with
  parameter 1. Hence, we arrive at the expression for the
  probability of the link $(i,j)$ being online:
  \begin{equation}
  \label{eqn-link-prob}
  P_{ij} = \mathbb{P} \left( g_{ij,k}^2>\frac{K_{ij}}{S_{ij}} \right) =
e^{-\frac{K_{ij}}{S_{ij}}}.
  \end{equation}
  We constrain the choice of transmission powers by $S_{ij}=S_{ji}$\footnote{We assumed equal noise variances $\sigma_n^2 = \mathrm{Var}(n_{ij,k})=\mathrm{Var}(n_{ji,k})$ so that $K_{ij}=K_{ji}$, which implies the constraint $S_{ij}=S_{ji}$. Our analysis easily extends to unequal noise variances, in which case we would require $\frac{K_{ij}}{S_{ij}}=\frac{K_{ji}}{S_{ji}}$; this is not considered here.}, so that
   the link
  $(i,j)$ is online if and only if the link $(j,i)$ is online, i.e.,
  the graph realizations are undirected graphs. Hence,
  the underlying communication model is the link failure model,
  with the link occurrence probabilities $P_{ij}$ in
\eqref{eqn-link-prob} that are dependent on the transmission powers
$S_{ij}$.

With this model, the rate of consensus $I$ is given by~\eqref{eqn-link-failure-min-cut}, where
the weight $c_{ij}$ associated with link $(i,j)$ is:
\[
c_{ij}(S_{ij}) =  - \log \left( 1 - e^{-K_{ij}/S_{ij}}   \right).
\]
We denote by $\{S_{ij}\}$
 the set of all powers $S_{ij}$, $\{i,j\} \in E.$
%
\begin{lemma}
\label{corollary-convex-mincut}
The function $I \left(  \{S_{ij}\} \right) =
\mathrm{mincut}(V,E,C)$, with $c_{ij} = -\log (1 - e^{-K_{ij}/S_{ij}})$, for $\{i,j\} \in E$,
 and $c_{ij}=0$ else, is concave.
\end{lemma}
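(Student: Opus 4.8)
The plan is to exploit that a minimum cut value is, by construction, the minimum over a finite family of linear combinations of the edge costs, together with two elementary facts: a pointwise minimum of concave functions is concave, and a finite sum of concave functions is concave.

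First I would write
\[
I\bigl(\{S_{ij}\}\bigr)=\mathrm{mincut}(V,E,C)=\min_{F}\ \sum_{\{i,j\}\in F}c_{ij}(S_{ij}),
\]
where the minimum is over the finitely many edge sets $F\subseteq E$ whose removal disconnects $G=(V,E)$, with $c_{ij}(S_{ij})=-\log(1-e^{-K_{ij}/S_{ij}})$ on the edges of $E$ and $c_{ij}\equiv 0$ otherwise. Since a pointwise minimum of concave functions is concave, it suffices to prove that for each fixed disconnecting set $F$ the map $\{S_{ij}\}\mapsto\sum_{\{i,j\}\in F}c_{ij}(S_{ij})$ is concave on the domain of admissible powers.

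Next I would reduce this to a scalar statement. Each summand $c_{ij}(S_{ij})$ depends on a single coordinate of the power vector, and a finite sum of functions of pairwise distinct coordinates is jointly concave if and only if each summand is concave in its own variable (the zero-cost edges contributing nothing). Hence the whole claim reduces to showing that the one-variable function
\[
\phi(s):=-\log\bigl(1-e^{-K/s}\bigr),\qquad K=K_{ij}>0,
\]
is concave in $s$ over the operative range of transmission powers.

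The main obstacle is precisely this scalar concavity, which I would establish by a direct second-derivative computation. With the substitution $w=K/s$ and two differentiations, one finds that $\phi''(s)$ equals a strictly positive factor times $w e^{w}-2(e^{w}-1)$, so that $\phi''\le 0$ is equivalent to the elementary one-variable inequality $2(e^{w}-1)\ge w e^{w}$, that is, $e^{w}(2-w)\ge 2$. Pinning down this estimate over the relevant range of $w$ is the delicate step of the argument; once it is in hand, concavity of each $c_{ij}$, hence of each partial sum $\sum_{\{i,j\}\in F}c_{ij}$, and finally of their pointwise minimum $I$, follows immediately.
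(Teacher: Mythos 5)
Your reduction is exactly the paper's: write $\mathrm{mincut}(V,E,C)$ as a minimum over the finitely many disconnecting edge sets of a sum of single-variable costs $c_{ij}(S_{ij})$, then invoke closure of concavity under sums and under pointwise minima. The entire content of the lemma therefore sits in the scalar step you defer, and that step is where the argument breaks. Your second-derivative computation is set up correctly: with $w=K/s$ one finds $\mathrm{sign}\,\phi''(s)=\mathrm{sign}\bigl(w-2(1-e^{-w})\bigr)$, so $\phi''\le 0$ is equivalent to $2(e^{w}-1)\ge we^{w}$. But this inequality is \emph{false} for large $w$: the function $w\mapsto 2(1-e^{-w})-w$ vanishes at $w=0$, increases up to $w=\log 2$, then decreases and crosses zero at $w^\star\approx 1.594$; beyond that it is negative (at $w=2$, for instance, $2(1-e^{-2})\approx 1.73<2$). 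Hence $\phi(s)=-\log(1-e^{-K/s})$ is concave only for $s\ge K/w^\star$, i.e.\ only when the link probability $e^{-K/s}$ exceeds $1-w^\star/2\approx 0.203$, and is strictly convex for smaller powers.

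So ``pinning down the estimate over the relevant range of $w$'' is not a routine verification that can be left to the reader: with no restriction on the powers, the lemma cannot be proved by this route because it is not true on all of $\{S_{ij}>0\}$. (The paper's own proof has the same defect: it asserts that the second derivative is non-positive for all $S_{ij}\ge 0$, which the computation above contradicts.) To close the gap you must either restrict the feasible powers to $S_{ij}\ge K_{ij}/w^\star$ for every edge --- equivalently, require each link probability $P_{ij}$ to be at least $\approx 0.203$ --- on which box each $c_{ij}$ is genuinely concave and the rest of your argument goes through verbatim, or else abandon concavity in the $S_{ij}$ variables and rework the convexity analysis of the power-allocation problem (e.g.\ via a change of variables) from scratch.
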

\begin{proof}
Note that the function $I \left(  \{S_{ij}\} \right) =
\mathrm{mincut}(V,E,C)$ can be expressed as
\[
\min_{E^\prime \subset E:\, G^\prime = (V,E^\prime)\mathrm{\,is\,disconnected}}
\sum_{\{i,j\} \in E \setminus E^\prime } c_{ij}(S_{ij}).
\]
On the other hand, $c_{ij}(S_{ij})$ is concave in $S_{ij}$ for $S_{ij} \geq 0$, which can be shown by computing the second derivative and noting that it is non-positive. Hence, $I \left(  \{S_{ij}\} \right)$ is a pointwise minimum of concave functions, and thus it is concave.
\end{proof}
\mypar{Power allocation problem formulation}
We now formulate the optimal power
allocation problem as the problem of minimizing the total transmission
power used at time $k$, $2 \sum_{\{i,j\} \in E} S_{ij}$, so that the
distributed detector achieves asymptotic optimality. This translates into the following optimization problem:

\begin{equation}
\begin{array}[+]{ll}
\mbox{minimize} & \sum_{\{i,j\} \in E} S_{ij}\\
\mbox{subject to} & I \left( \{S_{ij}\}\right) \geq I^\star.
\end{array}.
\label{eqn-opt-problem}
\end{equation}

The cost function in \eqref{eqn-opt-problem} is
linear, and hence convex. Also, the constraint set $\left\{
\{S_{ij}\}:\,\,  I \left( \{S_{ij}\}\right) \geq I^\star\right\}=
\left\{ \{S_{ij}\}:\,\,   - I \left( \{S_{ij}\}\right) \leq  -I^\star\right\}$ is
 convex, as a sub level set of the convex function $ - I \left(
\{S_{ij}\}\right) $. (See
 Lemma \ref{corollary-convex-mincut}.) Hence, we have just
 proved the following Lemma.
 \begin{lemma}
 \label{lemma-convex-problem}
 The optimization problem~\eqref{eqn-opt-problem} is convex.
 \end{lemma}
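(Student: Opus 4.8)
The plan is simply to check the two defining features of a convex program for \eqref{eqn-opt-problem}: that we minimize a convex objective over a convex feasible set (implicitly intersected with the physical domain $S_{ij}\geq 0$, itself a convex cone). First I would note that the objective $\sum_{\{i,j\}\in E} S_{ij}$ is linear in the decision variables $\{S_{ij}\}$ and hence convex. Then I would turn to the constraint, which defines the feasible set $\{\{S_{ij}\}:\, I(\{S_{ij}\})\geq I^\star\}$.

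The single substantive input is Lemma~\ref{corollary-convex-mincut}, which states that $I(\{S_{ij}\})=\mathrm{mincut}(V,E,C)$, with $c_{ij}=-\log(1-e^{-K_{ij}/S_{ij}})$, is a concave function of the powers. Granting this, the constraint set is exactly the superlevel set $\{\,I\geq I^\star\,\}$ of a concave function, equivalently the sublevel set $\{-I\leq -I^\star\}$ of the convex function $-I$, and such sets are convex; intersecting with $\{S_{ij}\geq 0\}$ preserves convexity. Combining a linear (hence convex) objective with a convex feasible set gives that \eqref{eqn-opt-problem} is convex, which is the claim.

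There is essentially no obstacle at the level of this Lemma: all the genuine content sits inside Lemma~\ref{corollary-convex-mincut}, whose proof in turn reduces to two routine facts — that each $c_{ij}(S_{ij})$ is concave on $S_{ij}>0$ (a one-line check that the second derivative is nonpositive) and that a pointwise minimum of finitely many concave functions is concave. If I were to flag any delicacy at all, it would be the boundary behavior as some $S_{ij}\to 0^+$, where $c_{ij}\to 0$, and the attainment of the minimum over cut sets; but since there are only finitely many candidate disconnecting edge sets $E'$ and each $c_{ij}$ extends continuously, this is immaterial.
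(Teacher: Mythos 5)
Your proposal is correct and follows exactly the paper's own argument: the objective is linear (hence convex), and the feasible set is the sublevel set of the convex function $-I(\{S_{ij}\})$, whose convexity is supplied by Lemma~\ref{corollary-convex-mincut}. The remarks about the domain $S_{ij}\geq 0$ and boundary behavior are harmless additions that do not change the argument.
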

Convexity of~\eqref{eqn-opt-problem} allows us to find a globally optimal
power allocation. The next subsection demonstrates by simulation
that the optimal power allocation significantly improves the performance of distributed detection over the uniform power allocation.
\subsection{Simulation example}
We first describe the simulation setup.
We consider a geometric network with $N=14$ sensors.
We place the sensors uniformly over a unit square, and
connect those sensors whose distance $d_{ij}$ is less than
a radius. The total number of (undirected) links is $38$.
(These $38$ links are the failing links, for which
we want to allocate the transmission powers $S_{ij}$.)
 We set the coefficients $K_{ij} = 6.25 d_{ij}^\alpha,$ with $\alpha=2.$
For the averaging weights, we use Metropolis weights, i.e., if link $\{i,j\}$ is online, we assign $W_{{ij},k}=1/(1+\max \{d_{i,k},d_{j,k}\})$, where $d_{i,k}$ is the degree of node $i$ at time $k$ and $W_{{ij},k}=0$ otherwise;
also, $W_{{ii},k}=1-\sum_{j \in O_{i,k}} W_{{ij},k}$. For the sensors' measurements,
we use the Gaussian distribution $f_1 \sim \mathcal{N}(m,\sigma^2)$,
$f_0 \sim \mathcal{N}(0,\sigma^2)$, with $m=0.0447$, and $\sigma^2=1.$ The
corresponding value $I^\star=(N-1)N\frac{m^2}{8 \sigma^2} = 0.0455.$, see \cite{Non-Gaussian-DD}.

To obtain the optimal power allocation, we solve the optimization problem \eqref{eqn-opt-problem} by
applying the subgradient algorithm with
constant stepsize $\beta = 0.0001$ on the unconstrained
exact penalty reformulation of~\eqref{eqn-opt-problem}, see,
e.g.,~\cite{Urruty}, which is to minimize $\sum_{\{i,j\} \in E} S_{ij}
+ \mu \max \left\{ 0, - \mathrm{mincut}(V,E,C)+I^\star \right\}$,
where $C=[c_{ij}]$, $c_{ij} = - \log(1-e^{-K_{ij}/S_{ij}})$, for $\{i,j\}
\in E$, and zero else; and $\mu$ is the
penalty parameter that we set to $\mu=500.$
We used the MATLAB implementation~\cite{min-cut-software} of the min-cut algorithm from~\cite{SimpleMinCut}.

\mypar{Results} Figure~\ref{Fig-simulations-error-prob} plots the detection error
probability of the worst sensor $\max_{i=1,...,N} P^e_i(k)$ versus
time $k$ for the optimal power allocation $\{S_{ij}^\star\}$ (solid blue line),
and the uniform power allocation $S_{ij}=S$ across
 all links, such that the total power
 per $k$ over all links $2 \sum_{\{i,j\} \in E}S_{ij} = 2 \sum_{\{i,j\}\in E}S_{ij}^\star=:\mathcal{S}.$
 We can see that the optimal power allocation
 scheme significantly outperforms the uniform
 power allocation. For example, to achieve the
 error probability $0.1$, the optimal
 power allocation scheme requires about $550$ time steps, hence
 the total consumed power is $550 \mathcal{S}$;
 in contrast, the uniform power allocation needs
 more than $2000 \mathcal{S}$ for the same target
 error $0.1$, i.e., about four times more power. In addition, Figure~\ref{Fig-simulations-error-prob}
 plots the detection performance
 for the uniform power allocation with
 the total power per $k$ equal to
 $\mathrm{sr} \times \mathcal{S}$,
  $\mathrm{sr} = 2,3,3.4.$ We can
  see, for example, that
   the scheme with $\mathrm{sr}=3.4$ takes
    about $600$ time steps
    to achieve an error of $0.1$,
    hence requiring about
    $600 \times 3.4 \times \mathcal{S} = 2040 \mathcal{S}$.
    In summary, for the target error of $0.1$,
    our optimal power allocation saves about $75 \%$ of
    the total power over the uniform power allocation.
\begin{figure}[thpb]
  \centering
  \includegraphics[scale=1, height=6cm, width=9.5cm]{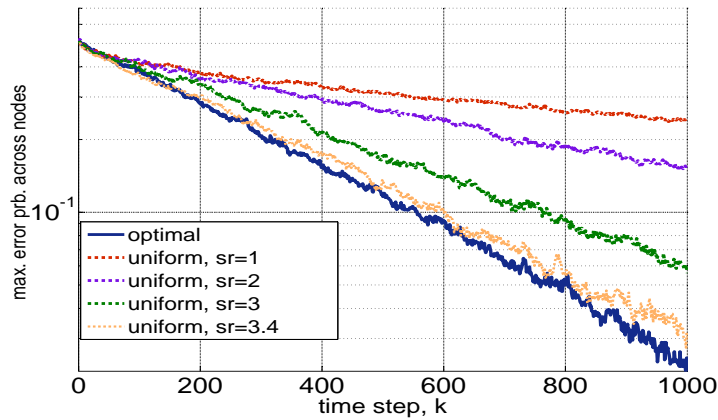}
  \caption{
  Detection error probability of the worst
  sensor versus time $k$ for the optimal and uniform
  power allocations, and different values
  of $\mathrm{sr}= \frac{\mathrm{total \,power \,per \,k\, for\,uniform\,allocation}}
  {\mathrm{total \,power \,per \,k\, for\,optimal\,allocation}}$.}
  \label{Fig-simulations-error-prob}
\end{figure}
\section{Conclusion}
\label{section-conclusion}
In this paper, we found the exact exponential decay rate $I$ of the convergence in probability for products of i.i.d. symmetric stochastic matrices $W_k$. We showed that the rate $I$ depends solely on the probabilities of the graphs that underly the matrices $W_k$. In general, calculating the rate $I$ is a combinatorial problem. However, we show that, for the two commonly used averaging models, gossip and link failure, the rate $I$ is obtained by solving an instance of the min-cut problem, and is hence easily computable. Further, for certain simple structures, we compute the rate $I$ in closed form: for gossip over a spanning tree, $I=|\log (1-p_{ij})|$, where $p_{ij}$ is the occurrence probability of the ``weakest'' link, i.e., the smallest-probability link; for both gossip and link failure models over a regular network, the rate $I=|\log p_{\mathrm{isol}}|$, where
$p_{\mathrm{isol}}$ is the probability that a node is isolated from the rest of the network at a time. Intuitively, our results show that the rate $I$ is determined by the most likely way in which the network stays disconnected over a long period of time.
Finally, we illustrated the usefulness of rate $I$ by finding a globally optimal allocation of the sensors' transmission power for consensus+innovations distributed detection.

\bibliographystyle{IEEEtran}
\bibliography{IEEEabrv,Bibliography}
\end{document}